\definecolor{NoteColor}{rgb}{1,0,0}
\renewcommand{\textsc}{\textcolor{red}}
\newtheorem{theorem}{\rm\bf Theorem}[section]
\newtheorem{proposition}[theorem]{\rm\bf Proposition}
\newtheorem{lemma}[theorem]{\rm\bf Lemma}
\newtheorem{construction}[theorem]{\rm\bf Construction}
\newtheorem*{theorem 1}{\rm\bf Proposition 1}
\newtheorem*{theorem 2}{\rm\bf Proposition 2}
\theoremstyle{definition}
\theoremstyle{remark}
\newtheorem{remark}[theorem]{\rm\bf Remark}
\newtheorem{problem}[theorem]{\rm\bf Problem}
\def\interieur#1{\mathord{\mathop{\kern 0pt #1}\limits^\circ}}
\title[Hyperbolic geometry]{On hyperbolic analogues of some classical theorems in spherical geometry}
\author{Athanase Papadopoulos}
\address{Athanase Papadopoulos,   Institut de Recherche Math\'ematique Avanc\'ee, Universit{\'e} de Strasbourg and CNRS,
7 rue Ren\'e Descartes,
 67084 Strasbourg Cedex, France ;
 and CUNY, Hunter College, Department of Mathematics  and Statistics,  695, park Ave. NY 10065, USA} 
 \email{athanase.papadopoulos@math.unistra.fr}
\author{Weixu Su}\address{Weixu Su,   Department of Mathematics, Fudan University, 200433, Shanghai, China} 
 \email{suwx@fudan.edu.cn}
\date{\today}
\begin{document}

\maketitle
  
\begin{abstract} We provide hyperbolic analogues of some classical theorems in spherical geometry due to Menelaus,  Euler, Lexell,  Ceva and  Lambert. Some of the spherical results are also made more precise. Our goal is to go through the works of some of the eminent mathematicians from the past and to include them in a modern perspective. Putting together results in the three constant-curvature geometries and highlighting the analogies between them is mathematically as well as aesthetically very appealing.

\medskip

\noindent The paper will appear in a volume of the Advanced Studies in Pure Mathematics, ed. L. Fujiwara, S. Kojima and K. Ohshika, Mathematical Society of Japan.

\medskip

\noindent AMS classification: 53A05 ; 53A35.

\medskip

\noindent Keywords:  Hyperbolic geometry, spherical geometry, Menelaus Theorem, Euler Theorem, Lexell Theorem, Ceva theorem, Lambert theorem.
\end{abstract}

\bigskip


\section{Introduction}

We obtain hyperbolic analogues of several theorems in spherical geometry. The first theorem is due to Menelaus and is contained in his \emph{Spherics} (cf. \cite{RP1} \cite{RP2} \cite{Krause} \cite{RP}). The second is due to Euler \cite{Euler-Geometrica-T}. The third was obtained by Euler \cite{Euler-Variae-T} and by his student Lexell \cite{Lexell-Solutio}. We shall elaborate in the corresponding sections on the importance and the impact of each of these theorems. We also include a  proof of the hyperbolic version of the Euclidean theorem attributed to Ceva, which is in the same spirit as Euler's theorem (although the proof is easier). We also give a proof of the hyperbolic version of a theorem of Lambert, as an application of the hyperbolic version of the theorem of Euler that we provide. In the course of proving the hyperbolic analogues, we also  obtain more precise versions of some of the results in spherical geometry. 

\medskip

\noindent {\bf Acknowledgements.} The second author is partially supported by the NSFC grant No: 11201078. Both authors are partially supported by the French ANR grant FINSLER. They are thankful for Norbert A'Campo discussions on this subject and to the referee for reading carefully the manuscript and making several corrections.

\section{A result on right triangles}

We start with a result on right triangles which makes  a relation between the hypothenuse and a cathetus, in terms of the angle they make (Theorem \ref{thm:ratio}). This is a non-Euclidean  analogue of the fact that in the Euclidean case, the ratio of the two corresponding lengths is the cosine of the angle they make. Our result in the hyperbolic case is motivated by a similar (but weaker) result of Menelaus\footnote{Since this paper is motivated by classical theorems, a few words on history are in order. We have included them, for the interested reader, in this footnote and the following ones. We start in this note by some notes on the works of  Theodosius (2nd-1st c. B.C.), Menelaus (1st-2nd c. A.D.) and Euler (1707-1783).

Anders Johan Lexell (1740--1784), who was a young collaborator of Euler at the Saint-Petersburg Academy of Sciences and who was very close to him, concerning the work done before the latter on spherical geometry,  mentions Theodosius. He writes in the introduction to his paper \cite{Lexell-Solutio}: ``From that time in which the Elements of Spherical Geometry of Theodosius had been put on the record, hardly any other questions are found, treated by the geometers, about further perfection of the theory of figures drawn on spherical surfaces, usually treated in the Elements of Spherical Trigonometry  and aimed to be used in the solution of spherical triangles." A French translation of Theodosius' \emph{Spherics} is available \cite{Theodosius}.

 The work of Menelaus, which was done two centuries after Theodosius, is, in many respects, superior to the work of Theodosius. One reason is the richness and the variety of the results proved, and another reason is the methods used, which are intrinsic to the sphere. These methods  do not make use of the geometry of the ambient Euclidean space. No Greek manuscript of the important work of Menelaus, the \emph{Spherics},  survives, but manuscripts of Arabic translations are available (and most of them are still not edited). For this reason, this work is still very poorly known even today, except for the classical ``Menelaus Theorem" which gives a condition for the alignment of three points situated on the three lines containing the sides of a triangle. This theorem became a classic because it is quoted by Ptolemy, who used it in his astronomical major treatise, the \emph{Almagest}. Lexell and Euler were not aware of the work of Menelaus, except for his results quoted by Ptolemy. A critical edition with an English translation and mathematical and historical commentaries of  al  Haraw\=\i  's  version of Menelaus' \emph{Spherics}  (10th c.)  is in preparation \cite{RP}. We note by the way that the non-Euclidean versions of Menelaus' Theorem were used recently in the papers \cite{2012-Yamada2} \cite{2012-Yamada1}, in a theory of the Funk and Hilbert metrics on convex sets in the non-Euclidean spaces of constant curvature. This is to say that putting classical theorems in a modern perspective may be imporant for research conducted today.

Between the times of Menelaus and of Euler, no progress was made in the field of spherical geometry. Euler wrote  twelve papers on spherical geometry and in fact he revived the subject. Several of his young collaborators and disciples followed him in this field (see the survey \cite{AP}).

}  in the spherical case contained in his \emph{Spherics}. Menelaus' result is of major importance from the historical point of view, because the author gave only a sketch of a proof, and writing a complete proof of it gave rise to several mathematical developments by Arabic mathematicians between the 9th and the 13th centuries. (One should remember that the set of spherical trigonometric formulae that is available to us today and on which we can build our proofs was not available at that time.) These developments include the discovery of duality theory and in particular the definition of the polar triangle in spherical geometry, as well as the  introduction of an invariant spherical cross ratio. It is also probable that the invention of the sine rule was motivated by this result. All this is discussed in the two papers \cite{RP1} and \cite{RP2}, which contains a report on the proof of Menelaus' theorem completed by several Arabic mathematicians. 

The proof that we give of the hyperbolic version of that theorem works as well in the spherical case, with a modification which, at the formal level, amounts to replacing some hyperbolic functions by the corresponding circular functions. (See Remark \ref{rem} at the end of this section.) Thus, in particular, we get a very short proof of Menelaus' Theorem.

\begin{figure}[htbp]

\centering
\includegraphics[width=10cm]{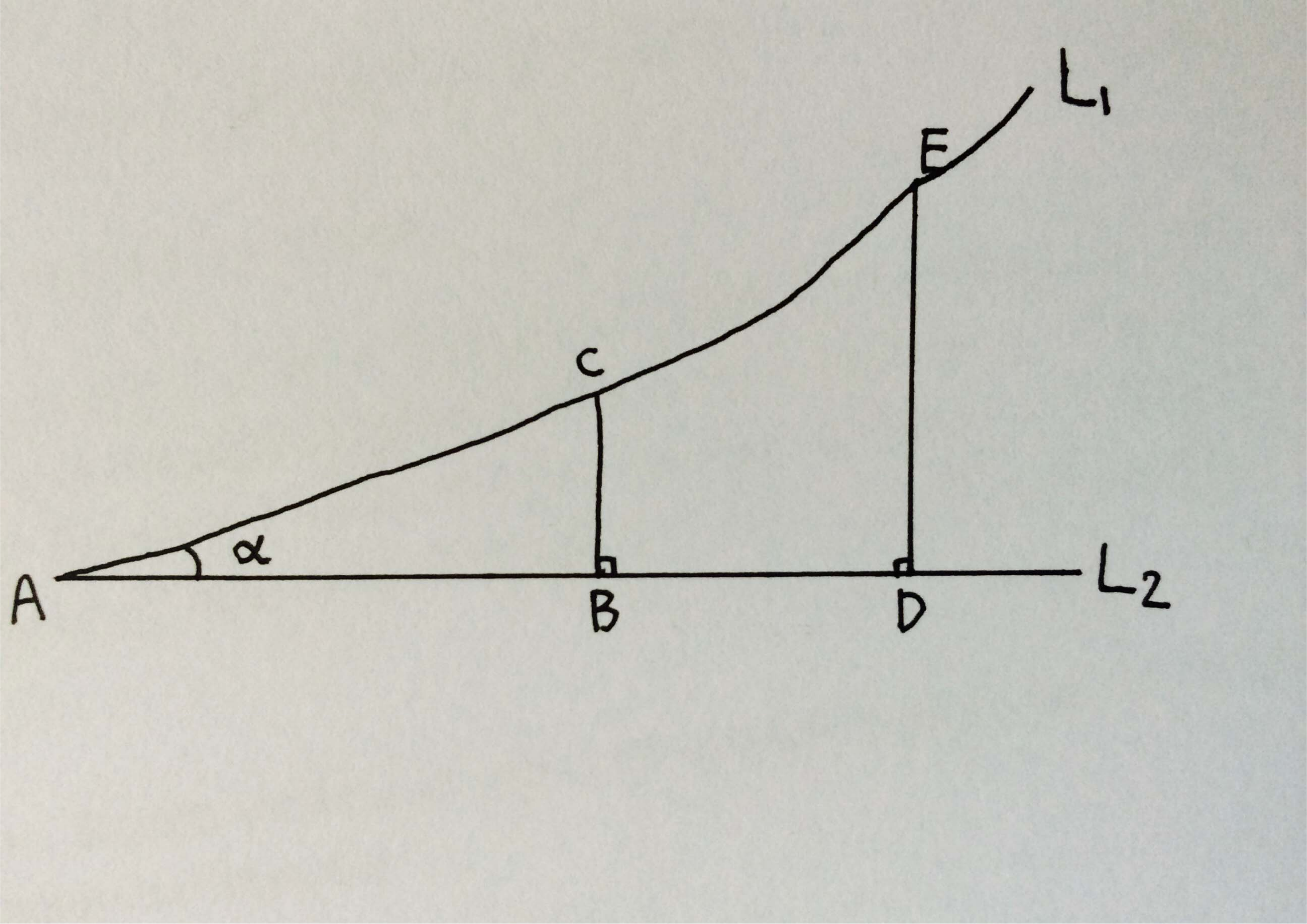}
\caption{The right triangles $ABC$ and $ADE$.} \label{fig:right}
\end{figure}

The statement of this theorem refers to Figure \ref{fig:right}.

\begin{theorem}\label{thm:ratio}
In the hyperbolic plane, consider two geodesics $L_1,L_2$ starting at a point $A$ and making an acute angle $\alpha$. Consider two points $C$ and $E$ on $L_1$, with $C$ between $A$ and $E$, and the two perpendiculars $CB$ and $ED$ onto $L_2$. Then, we have:

$$\frac{\sinh (AC+AB)}{\sinh (AC-AB)}=\frac{1+\cos \alpha}{1-\cos\alpha}.$$
\end{theorem}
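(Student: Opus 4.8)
The plan is to observe that, although the figure displays two right triangles, the displayed identity involves only the lengths $AC$ and $AB$; hence it is really a statement about the single right triangle $ABC$, whose right angle is at $B$ and whose angle at $A$ equals $\alpha$. (The second triangle $ADE$ enters only if one wishes to deduce the Menelaus-type consequence that the same quotient, formed from $AE$ and $AD$, again equals $\frac{1+\cos\alpha}{1-\cos\alpha}$; I set that aside here.) In this triangle $AC$ is the hypotenuse and $AB$ is the leg adjacent to the angle $\alpha$.

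First I would record the standard trigonometric relation for a hyperbolic right triangle connecting the adjacent leg, the hypotenuse, and the enclosed angle, namely
$$\cos\alpha=\frac{\tanh(AB)}{\tanh(AC)}.$$
This is the hyperbolic counterpart of the Euclidean formula ``adjacent $=$ hypotenuse $\times\cos\alpha$'', to which it degenerates in the small-triangle limit; it may be quoted from the usual list of right-triangle formulas or derived on the spot from the hyperbolic laws of sines and cosines. Since $\alpha$ is acute we have $\cos\alpha>0$, so this relation also gives $\tanh(AB)<\tanh(AC)$, i.e. $AB<AC$; this guarantees $\sinh(AC-AB)>0$, so the left-hand side is well defined.

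Next, writing $c=AC$ and $b=AB$ and applying the addition formulas $\sinh(c\pm b)=\sinh c\cosh b\pm\cosh c\sinh b$, I would form the quotient and divide numerator and denominator by $\sinh c\cosh b$ to obtain
$$\frac{\sinh(AC+AB)}{\sinh(AC-AB)}=\frac{1+\dfrac{\tanh b}{\tanh c}}{1-\dfrac{\tanh b}{\tanh c}}.$$
Substituting $\tanh b/\tanh c=\cos\alpha$ from the first step immediately yields $\frac{1+\cos\alpha}{1-\cos\alpha}$, which is the claim. The computation is entirely routine once the right-triangle identity is secured, so there is essentially no analytic obstacle; the only point requiring care is the bookkeeping of which leg is adjacent to $\alpha$, since using the opposite leg would bring in the wrong trigonometric function. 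Finally, I note that the argument is insensitive to the hyperbolic-versus-spherical distinction: replacing $\tanh,\sinh,\cosh$ by $\tan,\sin,\cos$ together with the corresponding right-triangle relation reproduces Menelaus' spherical statement, as anticipated in the text.
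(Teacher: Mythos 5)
Your proposal is correct and takes essentially the same route as the paper: your key identity $\cos\alpha=\tanh(AB)/\tanh(AC)$ is precisely the paper's Lemma \ref{lem:ratio} (which the paper derives from the hyperbolic sine and cosine laws, exactly as you suggest doing "on the spot"), and your expansion of $\sinh(AC\pm AB)$ followed by dividing through to get $\bigl(1+\tfrac{\tanh AB}{\tanh AC}\bigr)\big/\bigl(1-\tfrac{\tanh AB}{\tanh AC}\bigr)$ is the same final computation. Your added remark that $\cos\alpha>0$ forces $AB<AC$, so that $\sinh(AC-AB)>0$ and the quotient is well defined, is a small point of care the paper leaves implicit.
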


 In particular, we have
\[
\frac{\sinh (AC+AB)}{\sinh (AC-AB)}= \frac{\sinh (AE+AD)}{\sinh (AE-AD)},
\]
which is the form in which Menelaus stated his theorem in the spherical case (where $\sinh$ is replaced by $\sin$).

   To prove Theorem \ref{thm:ratio}, we use the following lemma.

\begin{lemma}\label{lem:ratio} In the triangle $ABC$, let $a=BC$, $b=AC$ and $c=AB$. Then we have:
$$\tanh c =\cos \alpha \cdot  \tanh b.$$
\end{lemma}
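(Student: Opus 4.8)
The plan is to deduce the identity from the hyperbolic law of cosines, which I take as a known ingredient of hyperbolic trigonometry. The first observation is a bookkeeping one: since $CB$ is perpendicular to the line $L_2$ containing $AB$, the triangle $ABC$ has its right angle at the vertex $B$. Hence $b=AC$ is the hypotenuse, while $c=AB$ is the leg adjacent to the angle $\alpha$ at $A$; getting this placement right is essential, because it determines which side is ``opposite'' which angle when the law of cosines is applied.

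First I would apply the law of cosines at the right angle $B$, whose opposite side is $b$. Since $\cos(\angle B)=\cos\frac{\pi}{2}=0$, this gives the hyperbolic Pythagorean relation
\[
\cosh b = \cosh a\,\cosh c,
\]
and in particular $\cosh a = \cosh b/\cosh c$. Next I would apply the law of cosines at the vertex $A$, whose opposite side is $a$:
\[
\cosh a = \cosh b\,\cosh c - \sinh b\,\sinh c\,\cos\alpha,
\]
and solve for $\cos\alpha$, obtaining $\cos\alpha = (\cosh b\,\cosh c - \cosh a)/(\sinh b\,\sinh c)$.

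The final step is substitution and simplification. Replacing $\cosh a$ by $\cosh b/\cosh c$ in the numerator and using $\cosh^2 c-1=\sinh^2 c$, the numerator factors as $\cosh b\,\sinh^2 c/\cosh c$; dividing by $\sinh b\,\sinh c$ then collapses everything to $\cos\alpha = \coth b\,\tanh c = \tanh c/\tanh b$, which rearranges to the claimed $\tanh c = \cos\alpha\cdot\tanh b$. I do not expect any real obstacle here: the computation is routine, and the only points demanding attention are the correct identification of the right angle at $B$ and the use of the identity $\cosh^2 c-\sinh^2 c=1$, which is precisely what converts the mixed $\sinh$/$\cosh$ expression into hyperbolic tangents. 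The content of the lemma is one of the standard right-triangle relations of hyperbolic geometry; stating it separately is useful only because it is the exact form needed to prove Theorem \ref{thm:ratio}.
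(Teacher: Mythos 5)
Your proof is correct, but it takes a different route from the paper's. Both arguments begin the same way, applying the hyperbolic law of cosines at the right angle $B$ to get the Pythagorean relation $\cosh b=\cosh a\,\cosh c$. The paper then brings in the hyperbolic \emph{sine} law, $\sinh b=\sinh a/\sin\alpha$, computes $\sin^2\alpha$, passes to $\cos^2\alpha=1-\sin^2\alpha$, factors the resulting expression, and finally extracts a square root, which requires invoking the positivity of $\cos\alpha$ (i.e.\ the acuteness of $\alpha$) to fix the sign. You instead apply the law of cosines a second time, at the vertex $A$, which expresses $\cos\alpha$ \emph{linearly}:
\[
\cos\alpha=\frac{\cosh b\,\cosh c-\cosh a}{\sinh b\,\sinh c},
\]
and then a one-line substitution of $\cosh a=\cosh b/\cosh c$ together with $\cosh^2 c-1=\sinh^2 c$ gives $\cos\alpha=\tanh c/\tanh b$. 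Your computation checks out. What your route buys is brevity and the elimination of the squaring/square-root step, hence no sign ambiguity to resolve; what the paper's route illustrates is the combined use of both the sine and cosine laws, but at the cost of a quadratic detour. Both arguments transfer verbatim to the spherical case (replacing hyperbolic by circular functions), which is what Remark \ref{rem} requires, so nothing is lost by your shortcut.
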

\begin{proof}
The formula is a corollary of the cosine and sine laws for hyperbolic triangles. We provide the complete proof.

From the hyperbolic cosine law, we have (using the fact that the angle $\widehat{ABC}$ is right)
$$\cosh b= \cosh a \cdot \cosh c.$$
By the hyperbolic sine law, we have
$$\sinh b = \frac{\sinh a}{\sin \alpha}.$$
As a result, we have

\begin{eqnarray*}
\sin^2\alpha &=& \frac{(\sinh a)^2}{(\sinh b)^2} = \frac{(\cosh a)^2-1}{(\sinh b)^2} \\
  &= & \displaystyle \frac{\frac{(\cosh b)^2}{(\cosh c)^2}-1}{(\sinh b)^2}= \frac{(\cosh b)^2-(\cosh c)^2}{(\cosh c)^2 \cdot (\sinh b)^2}.
\end{eqnarray*}
Then we have

\begin{eqnarray*}
\cos^2\alpha &=& 1- \frac{(\cosh b)^2-(\cosh c)^2}{(\cosh c)^2 \cdot (\sinh b)^2} \\
&=& \frac{(\cosh c)^2 \cdot (\sinh b)^2-(\cosh b)^2+(\cosh c)^2}{(\cosh c)^2 \cdot (\sinh b)^2}\\
&=& \frac{(\cosh c)^2 \cdot (\cosh b)^2-(\cosh b)^2}{(\cosh c)^2 \cdot (\sinh b)^2} \\
&=& \frac{(\sinh c)^2 \cdot (\cosh b)^2}{(\cosh c)^2 \cdot (\sinh b)^2}.
\end{eqnarray*}
Since $\cos\alpha >0$, we get
$$\cos\alpha= \frac{\sinh c \cdot \cosh b}{\cosh c \cdot \sinh b}.$$
\end{proof}

To prove Theorem \ref{thm:ratio}, it suffices to write the ratio $\displaystyle \frac{\sinh (b+c)}{\sinh (b-c)}$
as
$$\frac{\sinh b \cdot \cosh c + \cosh b \cdot \sinh c}{\sinh b \cdot \cosh c-\cosh b \cdot \sinh c}=\frac{1+\frac{\tanh c}{\tanh b}}{1-\frac{\tanh c}{\tanh b}}.$$
Using Lemma \ref{lem:ratio}, the above ratio becomes
$$\frac{1+\cos\alpha}{1-\cos\alpha}.$$

\begin{remark}\label{rem}

An analogous proof works for the spherical case, and it gives the following more precise result of Menelaus' theorem:
$$\frac{\sin (AC+AB)}{\sin (AC-AB)}=\frac{1+\cos \alpha}{1-\cos\alpha}.$$

\end{remark}

\section{Euler's ratio-sum formula for hyperbolic  triangles}
Euler, in his memoir \cite{Euler-Geometrica-T},\footnote{The memoir was published in 1815, that is, 22 years after Euler's death. There was sometimes a long span of time between the moment Euler wrote his articles and the moment they were published. The main reason was Euler's unusual productivity, which caused a huge backlog in the journals of the two Academies of Sciences (Saint Petersburg and Berlin) where he used to send his articles. There were also other reasons. For instance, it happened several times that when Euler knew that another mathematician was working on the same subject, he intentionally delayed the publication of his own articles, in order to leave the priority of the discovery to the other person, especially when that person was a young mathematician, like, in the present case, Lexell. Another instance where this happened was with Lagrange, who was 19 years younger than Euler, and to whom the latter, at several occasions, left him the primacy of publication, for what concerns the calculus of variations  ;  see \cite{AP}  and \cite{AP1} for comments on the generosity of Euler.} proved the following:

\begin{theorem}\label{thm:Euler}
Let $ABC$ be a triangle in the plane and let $D, E,F$ be points on the sides $BC, AC,AB$ respectively.
If the lines $AD, BE,CF$ intersect at a common point $O$, then we have
\begin{equation}\label{equ:Euler}
\frac{AO}{OD}\cdot\frac{BO}{OE}\cdot\frac{CO}{OF}=\frac{AO}{OD}+\frac{BO}{OE}+ \frac{CO}{OF}+2.
\end{equation}
\end{theorem}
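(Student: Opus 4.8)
The plan is to turn the identity into a purely algebraic one by encoding each cevian ratio as a ratio of areas. Assuming that $O$ lies in the interior of $ABC$ (equivalently, that $D,E,F$ lie on the open sides), write $S_A=[BOC]$, $S_B=[COA]$ and $S_C=[AOB]$ for the areas of the three triangles into which $O$ divides $ABC$.

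First I would prove a \emph{common side} lemma for the cevian $AD$. Because $A$, $O$, $D$ are collinear and $D\in BC$, the triangles $AOB$ and $DOB$ share the side $OB$, and the distances from $A$ and from $D$ to the line $OB$ are in the ratio $AO:OD$ (drop perpendiculars from $A$ and $D$ onto the line $OB$ and use the two similar right triangles at $O$). Hence $[AOB]/[DOB]=AO/OD$ and, by the same reasoning, $[AOC]/[DOC]=AO/OD$. Adding numerators and denominators and using $[DOB]+[DOC]=[BOC]$ gives
\[
\frac{AO}{OD}=\frac{[AOB]+[AOC]}{[BOC]}=\frac{S_B+S_C}{S_A},
\]
and the analogous arguments for $BE$ and $CF$ yield $\dfrac{BO}{OE}=\dfrac{S_C+S_A}{S_B}$ and $\dfrac{CO}{OF}=\dfrac{S_A+S_B}{S_C}$.

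The second step is elementary algebra. Putting $a=S_A$, $b=S_B$, $c=S_C$, the asserted equality becomes
\[
\frac{(b+c)(c+a)(a+b)}{abc}=\frac{b+c}{a}+\frac{c+a}{b}+\frac{a+b}{c}+2,
\]
which one checks by clearing the denominator $abc$: expanding the left numerator gives $a^2b+a^2c+b^2a+b^2c+c^2a+c^2b+2abc$, which is exactly $bc(b+c)+ca(c+a)+ab(a+b)+2abc$. This establishes \eqref{equ:Euler}.

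I expect the only delicate point to be the standing assumption that $O$ is interior, since the positive-area bookkeeping relies on it. To handle the general position in which the three cevian lines are concurrent outside the triangle, I would repeat the argument verbatim with \emph{signed} areas, so that the ratios $AO/OD$, etc., are read as signed ratios; the proportionality lemma then holds with matching signs and the algebraic identity is untouched. This is careful bookkeeping rather than a genuine obstacle.
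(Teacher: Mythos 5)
Your proof is correct, but it is worth saying how it sits relative to the paper. The paper never proves the Euclidean Theorem \ref{thm:Euler} at all: it is quoted from Euler's memoir, and what the paper actually proves is the hyperbolic analogue (Theorem \ref{thm:hyperbolic}), by hyperbolic trigonometry --- the cosine and sine laws yield Lemma \ref{lem:sin}, i.e.\ the three relations $\alpha P=Q+R$, $\beta Q=R+P$, $\gamma R=P+Q$ with $P=\sin p/\tanh AO$, etc., from which the identity follows by algebra. Your area decomposition is exactly the Euclidean shadow of those relations: writing $S_A=[BOC]=\frac{1}{2}\,BO\cdot CO\cdot\sin p$, $S_B=\frac{1}{2}\,CO\cdot AO\cdot\sin q$, $S_C=\frac{1}{2}\,AO\cdot BO\cdot\sin r$, your formula $AO/OD=(S_B+S_C)/S_A$ is precisely $\alpha P=Q+R$ with $\tanh$ replaced by the identity --- but you obtain it by elementary area bookkeeping rather than via the trigonometric laws, and you finish by direct expansion rather than by the paper's manipulation of the ratios $P/Q$. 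What your route buys is elementarity and self-containedness; it also fills a genuine logical need of the paper, since the paper's second proof of Theorem \ref{thm:hyperbolic} (projection of the hyperboloid onto the tangent plane at $O$) ends by declaring the problem ``reduced to the case of a Euclidean triangle'', i.e.\ it invokes exactly the statement you prove. What your route does \emph{not} buy is the generalization: in spherical or hyperbolic geometry area is angular excess/defect, the linear area identities fail, and the correct statement replaces length ratios by ratios of $\tan$ or $\tanh$; that is why the paper argues trigonometrically. One small simplification: since the statement puts $D,E,F$ on the sides (not their extensions), the concurrency point $O$ is automatically interior, so your closing paragraph about signed areas, while harmless, is not needed for the theorem as stated.
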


The following notation will be useful for generalization: Setting $\alpha=\displaystyle \frac{AO}{OD},\beta=\frac{BO}{OE}, \gamma=\frac{CO}{OF}$, Equation $\eqref{equ:Euler}$  is equivalent to \begin{equation}\label{equ:Euler1}
\alpha\beta\gamma=\alpha+\beta+\gamma+2.
\end{equation}

Euler also gave the following construction which is a converse of Theorem \ref{thm:Euler}:
  \begin{construction} 
Given three segments $AOD, BOE, COF$ meeting at a common point $O$ and satisfying $\eqref{equ:Euler}$, we can construct a triangle $ABC$ such that the points $D,E,F$ are as in the theorem.
\end{construction}

\begin{figure}[htbp]

\centering

\includegraphics[width=10cm]{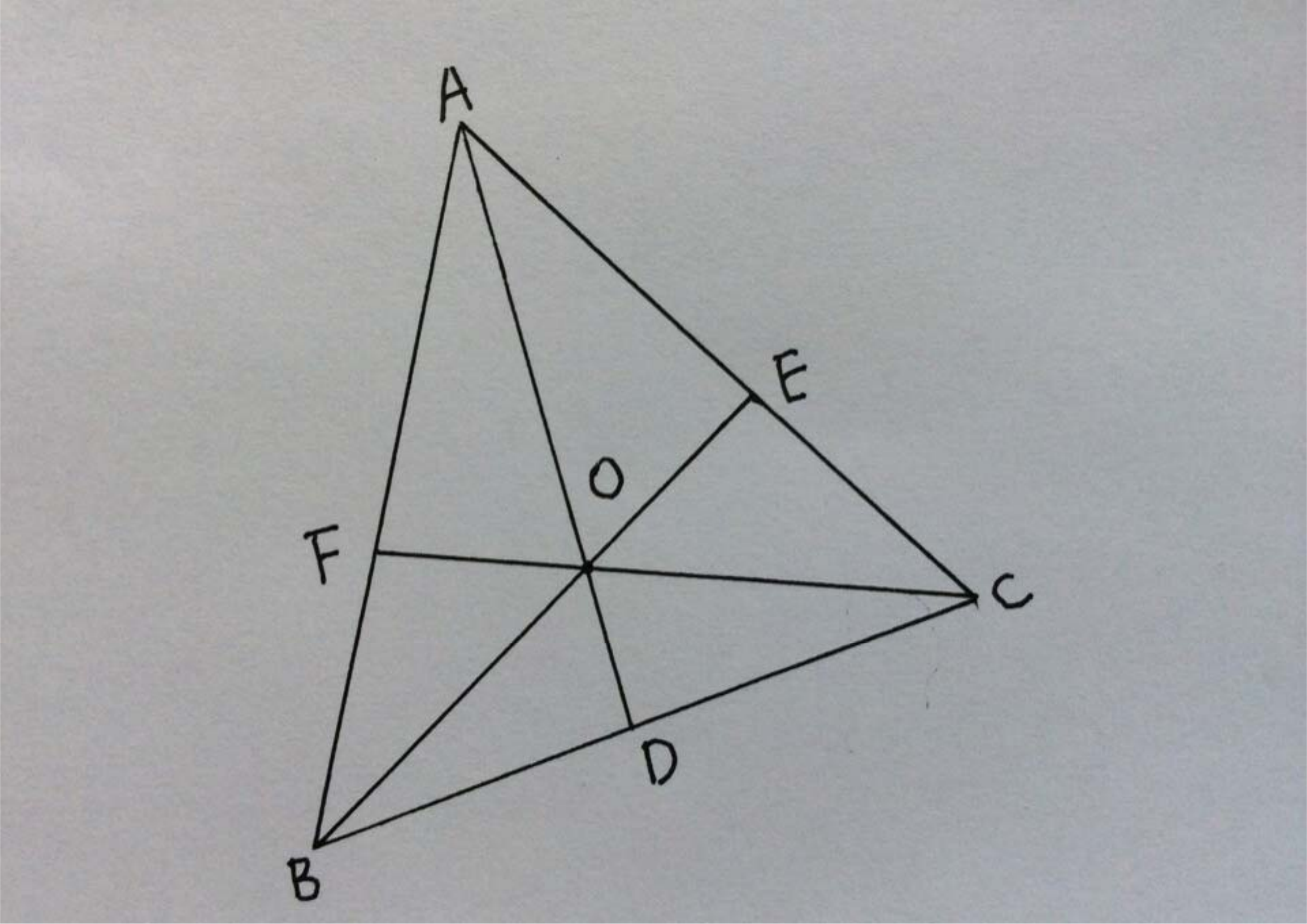}
\label{fig:triangle}
\caption{Triangle.} \label{fig:triangle}

\end{figure}

We now recall Euler's spherical version. We use the angular metric on the sphere. In the following exposition, in order to avoid dealing with special cases which would distract from the general theory, we shall assume that all the spherical triangles that we consider are contained in a quarter plane. In particular, the length of each side of a triangle is $\leq \pi/2$.

After the Euclidean case, Euler proved a version of Theorem \ref{thm:Euler} for spherical triangles. Using the above notation, we state Euler's theorem:

\begin{theorem}\label{thm:Euler2}
Let $ABC$ be a spherical triangle and let $D, E,F$ be points on the sides $BC, AC,AB$ respectively.
If the lines $AD, BE,CF$ intersect at a common point $O$, then 
\begin{equation}\label{equ:Euler2}
\alpha\beta\gamma=\alpha+\beta+\gamma+2
\end{equation}
where  $\alpha=\displaystyle \frac{\tan AO}{\tan OD},\ \beta=\frac{\tan BO}{\tan OE}$ and $\displaystyle \gamma=\frac{\tan CO}{\tan OF}$.
\end{theorem}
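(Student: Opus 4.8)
The plan is to deduce the spherical statement from the already-established Euclidean one (Theorem \ref{thm:Euler}) by a central (gnomonic) projection of the sphere onto the plane tangent at the point $O$. The reason to expect this to succeed is the precise form of the ratios $\alpha,\beta,\gamma$: they are built from tangents of arc lengths, and the tangent is exactly the function recording Euclidean distance under gnomonic projection \emph{measured from the point of tangency}. So the projection centered at $O$ should convert each spherical ratio $\tan AO/\tan OD$ into an honest Euclidean length ratio, reducing the problem to the Euclidean Euler formula.

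Concretely, I would realize the sphere as the unit sphere in $\mathbb{R}^3$ and treat $A,B,C,D,E,F,O$ as unit vectors. Let $\Pi$ be the plane tangent at $O$, and let $\pi$ be the central projection $X\mapsto X'=X/(X\cdot O)$ onto $\Pi$, defined for points with $X\cdot O>0$. Since every side has length $\le \pi/2$ and $O$ is interior, the whole configuration (including the arcs $BC,CA,AB$) lies in the open hemisphere centered at $O$, so $\pi$ is defined everywhere needed. This global well-definedness is the point to check with care, and it is exactly what the standing ``quarter-plane'' assumption secures.

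Two features of $\pi$ then drive the argument. First, $\pi$ carries great-circle arcs to straight segments and preserves incidence and betweenness inside the hemisphere; hence $A'B'C'$ is a Euclidean triangle, $D',E',F'$ lie on its sides $B'C',C'A',A'B'$, the tangent point is fixed ($O'=O$), and the three image cevians $A'D',B'E',C'F'$ still meet at the interior point $O$. Second, a one-line computation gives the distance from the tangent point: writing $A=(\cos AO)\,O+A_\perp$ with $A_\perp\perp O$ and $|A_\perp|=\sin AO$, one gets $|A'-O|=|A_\perp|/(A\cdot O)=\tan AO$, and likewise $|O-D'|=\tan OD$. As $A',O,D'$ are collinear with $O$ between them, the Euclidean ratio $A'O/OD'$ equals $\tan AO/\tan OD=\alpha$, and cyclically $B'O/OE'=\beta$ and $C'O/OF'=\gamma$.

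With these identifications, I would simply apply Theorem \ref{thm:Euler} to the Euclidean triangle $A'B'C'$ with interior point $O$ and cevian feet $D',E',F'$, obtaining $\frac{A'O}{OD'}\cdot\frac{B'O}{OE'}\cdot\frac{C'O}{OF'}=\frac{A'O}{OD'}+\frac{B'O}{OE'}+\frac{C'O}{OF'}+2$, which upon substitution is precisely $\alpha\beta\gamma=\alpha+\beta+\gamma+2$. The main obstacle is thus not the final algebra but the geometric bookkeeping of the projection: confirming that $\pi$ is globally defined on the configuration and that it sends $D$ on the arc $BC$ to $D'$ on the segment $B'C'$ (and similarly for $E,F$), with $O$ genuinely interior, so that the hypotheses of the Euclidean theorem hold. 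If one prefers to avoid projection, the same result follows by reducing, through elementary algebra, to the symmetric identity $\frac{1}{1+\alpha}+\frac{1}{1+\beta}+\frac{1}{1+\gamma}=1$, then noting $\frac{1}{1+\alpha}=\frac{\sin OD\cos AO}{\sin AD}=(A\cdot O)\frac{[OBC]}{[ABC]}$ (with $[XYZ]=\det(X,Y,Z)$, via the relation $O=\frac{\sin OD}{\sin AD}A+\frac{\sin AO}{\sin AD}D$), and summing using $O=\frac{[OBC]}{[ABC]}A+\frac{[OCA]}{[ABC]}B+\frac{[OAB]}{[ABC]}C$ dotted with $O$.
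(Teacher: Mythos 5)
Your proof is correct and takes essentially the approach the paper itself associates with this statement: the gnomonic projection onto the plane tangent at $O$, the key identity $|A'-O|=\tan AO$, and the reduction to the Euclidean Theorem~\ref{thm:Euler} are exactly the ``radial projection of the sphere onto a Euclidean plane tangent to the sphere'' that the paper (in the remark after its second proof of Theorem~\ref{thm:hyperbolic}) attributes to Euler as his proof of this spherical result. Indeed, the paper's own ``another proof'' of the hyperbolic analogue is the same argument transposed to the hyperboloid model, with $OA'=\tanh OA$ playing the role of your $\tan AO$, so your write-up (including the careful hemisphere check justified by the quarter-plane convention) is the spherical counterpart of the paper's argument.
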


We now prove an analogous result for hyperbolic triangles:

\begin{theorem}\label{thm:hyperbolic}
Let $ABC$ be a triangle in the hyperbolic plane and let $D, E,F$ be points on the lines joining the sides $BC, AC,AB$, respectively.
If the lines $AD, BE,CF$ intersect at a common point $O$, then
\begin{equation}\label{equ:hyperbolic}
\alpha\beta\gamma=\alpha+\beta+\gamma+2,
\end{equation}
where  $\alpha=\displaystyle \frac{\tanh AO}{\tanh OD},\, \beta=\frac{\tanh BO}{\tanh OE}$ and $\displaystyle\gamma=\frac{\tanh CO}{\tanh OF}$.
\end{theorem}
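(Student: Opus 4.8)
\emph{Proof plan.} The plan is to work in the hyperboloid (Lorentzian) model: realize the hyperbolic plane as the sheet $\{X:\langle X,X\rangle=-1,\ X_0>0\}$ of the two--sheeted hyperboloid in $\mathbb{R}^3$ with the form $\langle X,Y\rangle=-X_0Y_0+X_1Y_1+X_2Y_2$, so that $\cosh d(X,Y)=-\langle X,Y\rangle$ for normalized lifts. The gain is that geodesics are traces of $2$-planes through the origin, so collinearity becomes linear dependence of lifts; the concurrency hypothesis then lets me write the lift of $O$ as a positive barycentric combination $O=pA+qB+rC$ with $p,q,r>0$, normalized so that $\langle O,O\rangle=-1$. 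Since $\mathrm{span}(A,O)=\mathrm{span}(A,qB+rC)$ meets $\mathrm{span}(B,C)$ along the line through $qB+rC$, the foot $D$ of the cevian from $A$ has lift proportional to $qB+rC$; writing $D=(qB+rC)/N_D$ with $N_D=\sqrt{\,q^2+r^2+2qr\cosh(BC)\,}$ gives $O=pA+N_D\,D$, and symmetrically for $E,F$.

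Before computing I would record the elementary reduction that the target $\alpha\beta\gamma=\alpha+\beta+\gamma+2$ is equivalent to the symmetric relation
\[
\frac{1}{\alpha+1}+\frac{1}{\beta+1}+\frac{1}{\gamma+1}=1 ,
\]
which is what I will actually establish. The heart is a single lemma: if $O$ lies between $A$ and $D$ on a geodesic and $O=aA+dD$ in normalized lifts, then
\[
\frac{1}{\alpha+1}=\frac{\tanh OD}{\tanh AO+\tanh OD}=\frac{\cosh AO\,\sinh OD}{\sinh(AO+OD)}=a\cosh AO .
\]
The first two equalities use only $\alpha=\tanh AO/\tanh OD$, the relation $AO+OD=AD$, and the addition formula for $\sinh$; the last one, which is the crucial point, uses the normalization. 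Indeed, expanding $\langle aA+dD,aA+dD\rangle=-1$ gives $a^2+d^2+2ad\cosh AD=1$, and substituting this into $\cosh^2 OD-1$ collapses it to $a^2\sinh^2 AD$, so $\sinh OD=a\sinh AD=a\sinh(AO+OD)$ and the common factor cancels. I expect this normalization bookkeeping to be the main obstacle, precisely because the naive Euclidean proof by area ratios fails: in hyperbolic geometry the sub-triangle areas are not proportional to the cevian segments, which is exactly why the statement features $\tanh$ rather than ordinary length and why one is forced onto the Lorentzian form.

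Finally I would assemble the pieces. Since the coefficient of $A$ in $O=pA+N_D D$ is $a=p$, the lemma yields $\frac{1}{\alpha+1}=p\cosh AO=-p\langle A,O\rangle=-\langle pA,O\rangle$, and likewise $\frac{1}{\beta+1}=-\langle qB,O\rangle$ and $\frac{1}{\gamma+1}=-\langle rC,O\rangle$. Adding the three and using bilinearity,
\[
\frac{1}{\alpha+1}+\frac{1}{\beta+1}+\frac{1}{\gamma+1}=-\langle pA+qB+rC,\,O\rangle=-\langle O,O\rangle=1 ,
\]
which is the desired relation, hence $\alpha\beta\gamma=\alpha+\beta+\gamma+2$. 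Throughout I assume $O$ interior, so that $p,q,r>0$ and each foot lies between the relevant vertices, which guarantees the betweenness used in the lemma.
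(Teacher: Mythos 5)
Your proof is correct, and it takes a genuinely different route from both of the paper's proofs, even though it shares the hyperboloid-model setting with the second one. The paper proves the theorem twice: first by pure hyperbolic trigonometry, where the key lemma expresses $\sin\widehat{BOD}$ as $\tanh OF\cdot\bigl(\sin\widehat{BOF}/\tanh AO+\sin\widehat{AOF}/\tanh BO\bigr)$ using the sine and cosine laws and the supplementary angles at $F$, after which a cyclic linear system in the quantities $\sin p/\tanh AO$, $\sin q/\tanh BO$, $\sin r/\tanh CO$ is solved; and second in the Minkowski model, by radially projecting the configuration onto the Euclidean tangent plane at $O$, noting $OA'=\tanh OA$ etc., and thereby \emph{reducing to the Euclidean theorem of Euler}, which is quoted as known. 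Your argument stays in the Minkowski model but never projects and needs no Euclidean input: concurrency is encoded in the normalized lift $O=pA+qB+rC$ with $p,q,r>0$, the foot $D$ is forced to be proportional to $qB+rC$ by linear independence of $A,B,C$, and your key lemma $\frac{1}{\alpha+1}=p\cosh AO=-\langle pA,O\rangle$ is established by the normalization computation $\sinh OD=p\sinh AD$, which I checked and which is correct; the theorem then collapses to bilinearity plus $\langle O,O\rangle=-1$. What your route buys: a self-contained proof, and a structural explanation of the symmetric form $\frac{1}{\alpha+1}+\frac{1}{\beta+1}+\frac{1}{\gamma+1}=1$ --- which the paper records only as an after-the-fact remark --- as the statement that the three barycentric weights $-\langle pA,O\rangle$, $-\langle qB,O\rangle$, $-\langle rC,O\rangle$ sum to $1$; the same algebra on the round sphere with the standard inner product gives the spherical Theorem of Euler with tangents. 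What the paper's routes buy: the trigonometric proof uses no model and no linear algebra, and the projection proof, granted the Euclidean case, is shorter. Like the paper, you treat only the configuration with $O$ interior to the triangle (the paper's proofs assume this implicitly, e.g.\ via $\widehat{AOF}+\widehat{BOF}+\widehat{BOD}=\pi$); your positivity hypotheses $p,q,r>0$ simply make that restriction explicit.
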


\subsection{Proof of Theorem \ref{thm:hyperbolic}}

Let $ABC$ be a triangle in the hyperbolic plane.
Suppose that the lines $AD, BE$ and $CF$ intersect at $O$.

We shall use the cosine and sine laws in the hyperbolic triangle $AFO$.
The first formula gives the cosine of the angle  $\widehat{AFO}$ in terms of the side lengths of that triangle:
\begin{equation}\label{equ:cosine}
\cos \widehat{AFO}= \frac{\cosh AF \cdot \cosh OF -\cosh AO}{\sinh AF \cdot \sinh OF}.
\end{equation}

\begin{equation}\label{equ:sine}
\frac{\sinh AF}{\sin \widehat{AOF}}=\frac{\sinh AO}{\sin \widehat{AFO}}.
\end{equation}

\begin{lemma}\label{lem:sin} We have
$$\sin \widehat{BOD} = \tanh OF \cdot \left( \frac{\sin\widehat{BOF}}{\tanh AO} +  \frac{\sin\widehat{AOF}}{\tanh BO} \right).$$
\end{lemma}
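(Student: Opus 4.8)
The plan is to reduce the left-hand side to an angle sum at $O$, and then to play the two triangles $AOF$ and $BOF$, which share the side $OF$, against each other through their common vertex $F$.

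First I would record the angular geometry at $O$. Since $A$, $O$, $D$ are collinear with $O$ between $A$ and $D$, the angle $\widehat{BOD}$ is the supplement of $\widehat{AOB}$, so that $\sin\widehat{BOD}=\sin\widehat{AOB}$; and since $O$ lies inside the triangle while $F$ lies on the segment $AB$, the ray $OF$ separates $OA$ from $OB$, giving $\widehat{AOB}=\widehat{AOF}+\widehat{BOF}$. Expanding,
$$\sin\widehat{BOD}=\sin\widehat{AOF}\,\cos\widehat{BOF}+\cos\widehat{AOF}\,\sin\widehat{BOF},$$
and it remains to match this with the right-hand side of the lemma.

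Next I would set $\phi=\widehat{AFO}$, so that $\widehat{BFO}=\pi-\phi$ because $A$, $F$, $B$ are collinear. The sine law \eqref{equ:sine} in triangle $AFO$ and its analogue in $BOF$ give $\sin\widehat{AOF}=\sinh AF\,\sin\phi/\sinh AO$ and $\sin\widehat{BOF}=\sinh BF\,\sin\phi/\sinh BO$, while the cosine law \eqref{equ:cosine} and its analogue supply $\cos\widehat{AOF}$ and $\cos\widehat{BOF}$ in terms of the side lengths. The one extra ingredient that couples the two triangles comes from writing the cosine law at the vertex $F$ in each of them and using $\cos\widehat{BFO}=-\cos\widehat{AFO}$; after clearing denominators this collapses to
$$\sinh BF\,\cosh AO+\sinh AF\,\cosh BO=\cosh OF\,\sinh(AF+BF).$$

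Finally I would substitute these relations into both sides. Feeding the sine- and cosine-law expressions into the expanded $\sin\widehat{BOD}$, and using $\sinh(AF+BF)=\sinh AF\cosh BF+\cosh AF\sinh BF$ together with $\cosh^2 OF-1=\sinh^2 OF$, the left-hand side reduces to $\sinh(AF+BF)\,\sinh OF\,\sin\phi/(\sinh AO\,\sinh BO)$; the displayed identity at $F$ turns the right-hand side of the lemma into exactly the same quantity, which finishes the proof. I expect the main obstacle to be precisely that identity at $F$: it is the only place where the two triangles are linked, and the crux is to observe that the very combination $\sinh BF\cosh AO+\sinh AF\cosh BO$ it produces is what reappears, after expansion, in both the left- and the right-hand sides, forcing them to coincide.
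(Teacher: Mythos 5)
Your proof is correct --- I checked the two reductions, and both sides of the lemma do come out to $\sinh(AF+BF)\,\sinh OF\,\sin\phi/(\sinh AO\,\sinh BO)$ exactly as you claim --- but it is organized genuinely differently from the paper's. Both arguments rest on the same two geometric facts: supplementarity at $F$ (collinearity of $A,F,B$) and the angle sum $\widehat{AOF}+\widehat{BOF}+\widehat{BOD}=\pi$ at $O$. The paper, however, eliminates the side lengths $AF$, $BF$ at the outset (substituting the cosine law at $O$ into \eqref{equ:cosine}) so as to obtain closed formulas for $\tan\widehat{AFO}$ and $\tan\widehat{BFO}$ in terms of data at $O$ alone, and then encodes supplementarity as $\tan\widehat{AFO}+\tan\widehat{BFO}=0$; the lemma falls out of that single relation in one forward chain. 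You instead keep $AF$ and $BF$ in play, encode supplementarity as $\cos\widehat{BFO}=-\cos\widehat{AFO}$, and extract from it the cevian identity $\sinh BF\,\cosh AO+\sinh AF\,\cosh BO=\cosh OF\,\sinh(AF+BF)$ --- a hyperbolic analogue of Stewart's theorem, and a statement of independent interest --- then verify that this identity collapses both sides of the lemma to the same symmetric middle quantity. What your route buys is a reusable, quotable intermediate identity and a transparent check-both-sides structure; what the paper's route buys is a derivation that produces the formula rather than confirming a given target, working only with the quantities at $O$ that actually appear in the statement. One bookkeeping remark: \eqref{equ:cosine} as labeled gives $\cos\widehat{AFO}$, the angle at $F$; the expressions you need for $\cos\widehat{AOF}$ and $\cos\widehat{BOF}$ come from the cosine law applied at the vertex $O$ (the same substitution the paper uses to replace $\cosh AF$), which is clearly what you intend.
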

\begin{proof}
Combining $\eqref{equ:cosine}$  with $\eqref{equ:sine}$, we have
\[
\tan \widehat{AFO} = \frac{\sin \widehat{AFO}}{\cos \widehat{AFO}}=\frac{\sinh AO \cdot \sinh OF\cdot  \sin\widehat{AOF}}{\cosh AO\cdot \cosh OF-\cosh AF} 
\]
\smaller
\[= \frac{\sinh AO \cdot \sinh OF\cdot  \sin\widehat{AOF}}{\left( \cosh AO \cdot \cosh OF-\sinh AO\cdot\sinh OF\cdot \cos \widehat{AOF}\right)\cdot\cosh OF-\cosh AO},
\]
\larger
where we have replaced $\cosh AF$ by $$ \cosh AO \cdot \cosh OF-\sinh AO\cdot\sinh OF\cdot \cos \widehat{AOF}.$$
Using the identity,
$$\cosh^2 x-1=\sinh^2 x$$
we simplify the above equality to get
\begin{equation}\label{equ:tan}
\tan \widehat{AFO} = \frac{\sinh AO \cdot\sin\widehat{AOF}}{\cosh AO \cdot \sinh OF-\sinh AO\cdot \cosh OF\cdot \cos \widehat{AOF}}.
\end{equation}

In the same way, we have

\begin{equation}\label{equ:tan2}
\tan \widehat{BFO} = \frac{\sinh BO \cdot\sin\widehat{BOF}}{\cosh BO \cdot \sinh OF-\sinh BO\cdot \cosh OF\cdot \cos \widehat{BOF}}.
\end{equation}
Since $\widehat{AFO} + \widehat{BFO}= \pi$, $\tan \widehat{AFO} + \tan \widehat{BFO} =0$. Then $\eqref{equ:tan}$ and $\eqref{equ:tan2}$ imply:

\begin{eqnarray*}
&& \sinh AO \cdot \cosh BO \cdot \sinh OF \cdot \sin\widehat{AOF} \\
&-& \sinh AO \cdot \sinh BO \cdot \cosh OF\cdot \sin\widehat{AOF} \cdot \cos \widehat{BOF} \\
&+& \sinh BO \cdot \cosh AO \cdot \sinh OF \cdot \sin\widehat{BOF} \\
&-& \sinh BO \cdot \sinh AO \cdot \cosh OF \cdot \sin\widehat{BOF} \cdot \cos \widehat{AOF}=0.
\end{eqnarray*}
Equivalently, we have
\begin{eqnarray*}
&& \sinh AO \cdot \cosh BO \cdot \sinh OF \cdot \sin\widehat{AOF} \\
&+& \sinh BO 
 \cdot  \cosh AO \cdot \sinh OF \cdot \sin\widehat{BOF} \\
&=& \sinh AO \cdot \sinh BO \cdot \cosh OF 
\\
&.& \left( \sin\widehat{AOF} \cdot \cos \widehat{BOF} + \sin\widehat{BOF} \cdot \cos \widehat{AOF} \right) \\
&=& \sinh AO \cdot \sinh BO \cdot \cosh OF \cdot \sin \widehat{BOD},
\end{eqnarray*}
where the last equality follows from $\widehat{AOF} + \widehat{BOF} + \widehat{BOD}= \pi$. This implies
\smaller
\begin{eqnarray*}
\sin \widehat{BOD} 
\end{eqnarray*}
\smaller
\begin{eqnarray*}
= \frac{\sinh AO \cdot \cosh BO \cdot \sinh OF \cdot \sin\widehat{AOF} + \sinh BO \cdot \cosh AO \cdot \sinh OF \cdot \sin\widehat{BOF}}{\sinh AO \cdot \sinh BO \cdot \cosh OF } \\
\end{eqnarray*}
\larger
\begin{eqnarray*}
= \tanh OF \cdot \left( \frac{\sin\widehat{BOF}}{\tanh AO} +  \frac{\sin\widehat{AOF}}{\tanh BO} \right).
\end{eqnarray*}
\larger
\end{proof}

For simplicity, we set $p=\widehat{BOF}, q=\widehat{AOF}$ and $r=\widehat{BOD}$. Then $p+q+r=\pi$.
We write Lemma \ref{lem:sin} as

$$\frac{\sin r}{\tanh OF}=  \frac{\sin p}{\tanh AO} + \frac{\sin q}{\tanh BO}.$$

By repeating the arguments in the proof of Lemma \ref{lem:sin}, we have

$$\frac{\sin p}{\tanh OD}=  \frac{\sin q}{\tanh BO} + \frac{\sin r}{\tanh CO}$$
and

$$\frac{\sin q}{\tanh OE}=  \frac{\sin r}{\tanh CO} + \frac{\sin p}{\tanh AO}.$$

Setting $P= \displaystyle\frac{\sin p}{\tanh AO}, Q=  \frac{\sin q}{\tanh BO}$ and $\displaystyle R= \frac{\sin r}{\tanh CO}$, it follows from the above three equations that

\begin{equation}
\gamma R= P+Q \label{equ:R},
\end{equation}
\begin{equation}\alpha P= Q+R \label{equ:P},
\end{equation}
\begin{equation}
\beta Q= R+P \label{equ:Q}.
\end{equation}

Using \eqref{equ:R} and \eqref{equ:P}, we have

$$R= \frac{P+Q}{\gamma}=\alpha P - Q.$$ As a result,
$$\frac{P}{Q}=\frac{\gamma+1}{\alpha \gamma-1}.$$
On the other hand, it follows from \eqref{equ:P} and \eqref{equ:Q} that

$$\alpha P= Q+R= Q+ \beta Q-P.$$
Then we have

$$\frac{P}{Q}= \frac{\beta+1}{\alpha+1}.$$

Thus,

$$\frac{\gamma+1}{\alpha\gamma-1}=\frac{\beta+1}{\alpha+1},$$
which implies

$$\alpha\beta\gamma=\alpha+\beta+\gamma+2.$$

\begin{remark} In  \cite{Euler-Geometrica-T}, Euler also writes
Equation \eqref{equ:Euler1} as $$\frac{1}{\alpha+1}+ \frac{1}{\beta+1}+ \frac{1}{\gamma+1}=1,$$
and this leads, in the hyperbolic case, to the relation:
\[
\frac{\tanh OD}{\tanh AO+\tanh OD}+ \frac{\tanh OE}{\tanh BO+\tanh OE}+\frac{\tanh OF}{\tanh CO+\tanh OF}=1.
\]

\end{remark}

%

\subsection{The converse}

\begin{construction}From the six given quantities  $\mathbf{A,B,C, a,b,c}$ satisfying the
relation 
\begin{equation}\label{equ:relation}
\alpha\beta\gamma=\alpha+\beta+\gamma+2,
 \end{equation}
 where $$\alpha=\frac{\tanh \mathbf{A}}{\tanh \mathbf{a}}, \beta=\frac{\tanh \mathbf{B}}{\tanh \mathbf{b}}, \gamma=\frac{\tanh \mathbf{C}}{\tanh \mathbf{c}},$$
we can construct a unique triangle $ABC$ in which three line segments
$AD, BE, CF$ are drawn from each vertex to the opposite side, meeting at a point
$O$ and leading to the given arcs:

\begin{eqnarray*}
AO = \mathbf{A}, BO=\mathbf{B}, CO=\mathbf{C}, \\
OD = \mathbf{a}, OE=\mathbf{b}, OF=\mathbf{c}.
\end{eqnarray*}

\end{construction}

The construction is the same as Euler's in the case of a spherical triangle. We
are given three segments $AOD, BOE, COF$ intersecting at a common point $O$, and
we wish to find the angles $ \widehat{AOF},  \widehat{BOF},  \widehat{BOD}$ \ (Figure 2), so that the
three points $A, B, C$ are vertices of a triangle and $D, E, F$ are on
the opposite sides.

Staring from Equation $\eqref{equ:relation}$, we can show (reversing the above reasoning) that the angles $ \widehat{AOF},  \widehat{BOF},  \widehat{BOD}$ should satisfy
\smaller
$$\sin \widehat{BOF}=\tanh \mathbf{A} \cdot \frac{\Delta}{\alpha+1},\  \sin  \widehat{AOF}=\tanh \mathbf{B} \cdot \frac{\Delta}{\beta+1},\  \sin \widehat{BOD}=\tanh \mathbf{C} \cdot \frac{\Delta}{\gamma+1},$$
\larger
where $\Delta>0$ is to be determined.

Setting

$$G=\frac{\tanh \mathbf{A}}{\alpha+1}, H=\frac{\tanh \mathbf{B}}{\beta+1}, I=\frac{\tanh \mathbf{C}}{\gamma+1}$$
and using the fact that the angles satisfy the further equation
$\widehat{AOF} + \widehat{BOF} + \widehat{BOD} = \pi$,
we get (by writing the formula for the sum of two supplementary angles):

$$\Delta=\frac{\sqrt{(G+H+I)(G+H-I)(I+G-H)(H+I-G)}}{2GHI}.$$
Hence, $\Delta$ is uniquely determined. 

Note that  the area $M$ of a Euclidean triangle
with sides $G,H,I$ is given by the following (Heron Formula):
$$M = \Delta \cdot \frac{GHI}{2}.$$

A calculation gives then the following formula for the angles:
$$\sin \widehat{BOF}  = \frac{2M}{HI},\  \sin  \widehat{AOF} = \frac{2M}{IG},\  \sin  \widehat{BOD}  = \frac{2M}{GH}$$
From these angles, we can construct the triangle by drawing the lines $AD, BE$
that intersect at the point $O$ with angle $\widehat{BOD}$. 

\subsection{Another proof of Theorem \ref{thm:hyperbolic}}

We present another proof of Theorem \ref{thm:hyperbolic}, based on the hyperboloid model of the hyperbolic plane. This will make another analogy with the spherical case.

We denote by $\mathbb{R}^{2,1}$ the three-dimensional Minkowski space,
that is, the real vector space of dimension three equipped with the following pseudo-inner product:
$$<\mathbf{x},\mathbf{y}>=-x_0y_0+x_1y_1+x_2y_2.$$

We consider the hypersurface
$$\mathbb{H} := \{\mathbf{x}\in \mathbb{R}^{2+1} \ | \ x_0>0, <\mathbf{x},\mathbf{x}>=-1\}.$$
This is one of the two connected components of the ``unit sphere" in this space, that is, the sphere of radius $\sqrt{-1}$. We shall call this component the \emph{imaginary sphere}.

At every point $\mathbf{x}$ of the imaginary sphere, we equip the tangent space $\mathrm{T}_{\mathbf{x}}\mathbb{H}$ at $\mathbf{x}$ with the pseudo-inner product induced from that on $\mathbb{R}^{2+1}$. It is well known that this induced pseudo-inner product is a scalar product, and the imaginary sphere equipped with the length metric induced from these inner products  on tangent spaces is isometric to the hyperbolic plane. This is a model of the hyperbolic plane, called the Minkowski model. See \cite{Thurston} for some details.

 Let $\mathbf{x},\mathbf{y}$
be two points in $\mathbb{H}$. It is well known and not hard to show that their distance $d(\mathbf{x},\mathbf{y})$ is given by

$$\cosh d(\mathbf{x},\mathbf{y})=-<\mathbf{x},\mathbf{y}>.$$

Up to an isometry of $\mathbb{H}$, we may assume that $\mathbf{x}=(1,0,0)$ and $\mathbf{y}=a\mathbf{x}+b\mathbf{n}$,
where $\mathbf{n}=(0,1,0)$. The equation $<\mathbf{y},\mathbf{y}>=-1$ implies $a^2-b^2=1$. We may also assume that $b\geq 0$.
See Figure \ref{fig:model}.
Then we have
 $$\cosh d(\mathbf{x},\mathbf{y})=-<\mathbf{x},\mathbf{y}>=-a<\mathbf{x},\mathbf{x}>-b<\mathbf{x},\mathbf{n}>=a.$$
 It follows that
 \begin{equation}\label{eq:n}
 \mathbf{y}=\cosh \left(d(\mathbf{x},\mathbf{y})\right)\mathbf{x}+\sinh \left(d(\mathbf{x},\mathbf{y})\right)\mathbf{n}.
 \end{equation}

\begin{figure}[htbp]

\centering

\includegraphics[width=10cm]{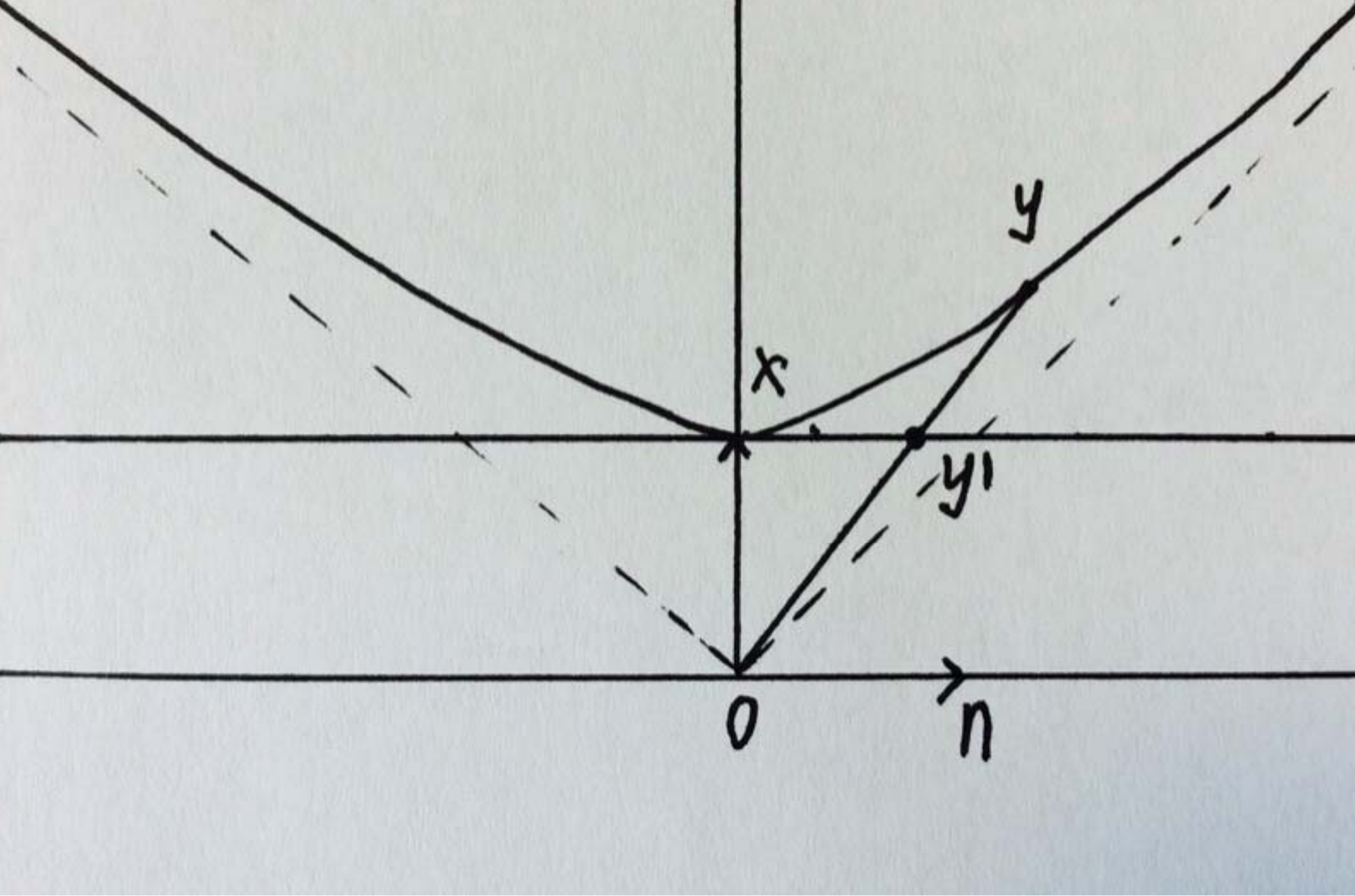}

\caption{The hyperboloid model of the hyperbolic plane.} \label{fig:model}

\end{figure}

\begin{proof}[Another proof of Theorem \ref{thm:hyperbolic}]
Consider a triangle  $ABC$ in $\mathbb{H}$, with $D, E, F$ on the lines $BC, CA, AB$, respectively.
Suppose that the lines $AD, BE$ and $CF$ intersect at $O$. Up to an isometry, we may suppose that the point $O$ is $(1,0,0)$.

Let $\Sigma$ be the plane tangent to $\mathbb{H}$ at $O=(1,0,0)$. We shall use the Euclidean metric on this plane. For any point $\mathbf{y}\in \mathbb{H}$, the line drawn from $\mathbf{0}=(0,0,0)$
through $\mathbf{y}$ intersects $\Sigma$ at a unique point, which we denote by $\mathbf{y}'$. Consider the points $A',B',C',D',E',F'$ obtained from the intersections of the lines $0A,0B,0C,0D,0E,0F$ with $\Sigma$, respectively.
By \eqref{eq:n},
$$\tan\widehat{O\mathbf{0}A}= \frac{OA'}{1}=\frac{\sinh OA}{\cosh OA}.$$
(Here $OA'$ denotes the Euclidean distance between $O$ and $A'$, and $OA$ denotes the hyperbolic distance between $O$ and $A$.)
As a result, $OA'=\tanh OA$. Similarly,  $OB'=\tanh OB,  OC'=\tanh OC$, $OD'=\tanh OD, OE'=\tanh OE$ and $OF'=\tanh OF$.

Since
$$\alpha=\frac{\tanh OA}{\tanh OD}=\frac{OA'}{OD'}, \, \beta=\frac{\tanh OB}{\tanh OE}=\frac{OB'}{OE'},\, \gamma= \frac{\tanh OC}{\tanh OF}=\frac{OC'}{OF'},$$
we have reduced the proof to the case of a Euclidean triangle.
\end{proof}

\begin{remark}
The preceding proof is inspired from an argument that Euler gave in a second proof of his Theorem \ref{thm:Euler2}. Euler's argument uses a radial projection of the sphere onto a Euclidean plane tangent to the sphere, which we have transformed into an argument that uses the radial projection of the imaginary sphere onto a Euclidean plane.
\end{remark}

\subsection{Ceva's theorem} 
 We refer again to Figure \ref{fig:triangle}. 
The classical theorem of Ceva\footnote{Giovanni Ceva (1647-1734) obtained the statement in the Euclidean case, in his \emph{De lineis rectis se invicem secantibus statica constructio}, 1678. According to Hogendijk, Ceva's theorem was already known to the Arabic mathematician Ibn H\=ud, cf. \cite{Hogendijk}.} gives another necessary and sufficient relation for the three lines $AD, BE, CF$ to meet in a point, and it has also Euclidean, spherical and hyperbolic versions. The Ceva identity is different from Euler's. The statement is:

\begin{theorem}
If the three lines $AD, BE, CF$ meet in a common point, then we have \begin{itemize}
\item in Euclidean geometry:
\[\frac{DB}{DC}\cdot\frac{EC}{EA}\cdot\frac{FA}{FB}=1;\] 

\item in spherical geometry:
\[ \frac{\sin DB}{\sin DC}\cdot\frac{\sin EC}{\sin EA}\cdot\frac{\sin FA}{\sin FB}=1.\] 
\item in hyperbolic geometry:
\[ \frac{\sinh DB}{\sinh DC}\cdot\frac{\sinh EC}{\sinh EA}\cdot\frac{\sinh FA}{\sinh FB}=1.\] 
\end{itemize}
\end{theorem}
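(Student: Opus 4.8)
The plan is to prove the three cases uniformly by reducing each to a computation involving triangle areas (or their analogues), exactly as in the classical Euclidean argument. First I would handle the hyperbolic case, which is the one genuinely new to the paper, and then observe that the spherical case is entirely parallel with $\sinh$ replaced by $\sin$, while the Euclidean case is the classical one. The central idea is that a cevian through the common point $O$ splits a triangle into two subtriangles whose areas are in a fixed ratio, and that this ratio of areas can be expressed through the $\sinh$ of the segments it cuts on the opposite side. Concretely, for the cevian $AD$ meeting $BC$ at $D$, I would compare the two triangles $ABD$ and $ACD$ (and also the two triangles $OBD$ and $OCD$ sharing the vertex $O$), whose bases $DB$ and $DC$ lie along the same geodesic.

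The key computational input is the formula for the area of a hyperbolic triangle in terms of two sides and the included angle, or equivalently a ratio formula of the type
\[
\frac{\sinh DB}{\sinh DC}=\frac{\text{(area-type quantity of } ABD)}{\text{(area-type quantity of } ACD)},
\]
which one obtains from the hyperbolic sine law applied in the two subtriangles sharing the common apex. The crucial point is that the triangles $ABD$ and $ACD$ share the height from $A$ to the line $BC$; writing the sine law in each and taking the quotient makes the shared quantities cancel, leaving a clean expression for $\sinh DB/\sinh DC$. I would then do the same for the pair $OBD$, $OCD$, which share the foot $D$ and the apex $O$. Subtracting, the ratio $\sinh DB/\sinh DC$ equals the ratio of the areas (or area-substitutes) of $ABO$ and $ACO$, since $ABD=ABO+OBD$ and similarly for the other. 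Performing this for all three cevians, multiplying the three ratios, and watching the six area-substitutes cancel in pairs yields the product equal to $1$.

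The step I expect to be the main obstacle is establishing the correct hyperbolic replacement for the Euclidean ratio ``base $\times$ height.'' In the Euclidean plane the ratio $DB/DC$ equals the ratio of the areas of the two subtriangles precisely because they share a height, but in hyperbolic geometry neither areas (which are defined by angle defect) nor the naive product of side lengths behave linearly, so the clean cancellation must instead come from the sine law rather than from any area comparison. I would therefore route the entire argument through the hyperbolic sine law: in each subtriangle the quantity $\sinh(\text{side})/\sin(\text{opposite angle})$ is constant, and the supplementary-angle relations at the feet $D,E,F$ (where $\sin$ of supplementary angles agree) together with the vertical-angle relations at $O$ provide exactly the cancellations needed. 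Assembling the three sine-law identities and multiplying them so that every $\sinh$ of a segment from $O$ and every $\sin$ of an angle at $O$ appears once in the numerator and once in the denominator is the heart of the proof; once that bookkeeping is set up correctly, the identity $\frac{\sinh DB}{\sinh DC}\cdot\frac{\sinh EC}{\sinh EA}\cdot\frac{\sinh FA}{\sinh FB}=1$ drops out, and replacing $\sinh$ by $\sin$ throughout gives the spherical statement verbatim.
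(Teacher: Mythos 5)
Your final plan---applying the hyperbolic sine law in the subtriangles with apex $O$, using the equality of sines of supplementary angles at the feet $D,E,F$ and of vertical angles at $O$, then multiplying the three resulting identities so the $\sinh$ of the segments $OA,OB,OC$ and the angle sines cancel---is exactly the proof the paper gives, and it is correct. The area-comparison framing in your first two paragraphs is an unnecessary detour (and, as you yourself note, would not survive in hyperbolic geometry), but since you explicitly discard it in favor of the sine-law bookkeeping, the proposal coincides with the paper's argument.
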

 
\begin{proof} We give the proof in the case of hyperbolic geometry. The other proofs are similar.
Assume the three lines meet at a point $O$. 
  By the sine law, we have
\[\frac{\sinh DB}{\sin \widehat{DOB}}=\frac{\sinh OB}{\sin \widehat{ODB}}
\]
and 
\[\frac{\sinh DC}{\sin \widehat{DOC}}=\frac{\sinh OC}{\sin \widehat{ODC}}.
\]
Dividing both sides of these two equations, we get:
 
\[\frac{\sinh DB}{\sinh DC}=\frac{\sinh OB}{\sinh OC}\cdot \frac{\sin \widehat{DOB}}{\sin \widehat{DOC}}.
\]
In the same way, we have
\[\frac{\sinh EC}{\sinh EA}=\frac{\sinh OC}{\sinh OA}\cdot \frac{\sin \widehat{EOC}}{\sin \widehat{EOA}}
\]
and
\[\frac{\sinh FA}{\sinh FB}=\frac{\sinh OA}{\sinh OB}\cdot \frac{\sin \widehat{FOA}}{\sin \widehat{FOB}}.
\]
Multiplying both sides of the last three equations and using the relations
\[\sin \widehat{DOB}= \sin \widehat{EOA}, \ 
 \sin \widehat{DOC}= \sin \widehat{FOA}, \ 
 \sin \widehat{EOC}= \sin \widehat{BOF},  
\]
we get the desired result.
 \end{proof}

  \begin{remark} The classical theorem of Ceva is usually stated with a minus sign at the right hand side (that is, the result is $-1$ instead of $1)$, and the length are counted algebraically. In this form, the converse of the theorem holds. The proof is also easy.
  \end{remark}

\subsection{A theorem of Lambert}

We present now a result of Lambert,\footnote{Johann Heinrich Lambert  (1728-1777) was an Alsatian mathematician (born in Mulhouse). He is sometimes considered as the founder of modern cartography, a field which was closely related to spherical geometry. His \emph{Anmerkungen und Zus\"atze zur Entwerfung der Land- und Himmelscharten} (Remarks and complements for the design of terrestrial and celestial maps, 1772) \cite{Lamb-Anmer} contains seven new projections of the sphere, some of which are still in use today, for various purposes. Lambert is an important precursor of hyperbolic geometry; he was probably the mathematician who came closest to that geometry, before this geometry was born in the works of Lobachevsky, Bolyai and Gauss. In his {\it Theorie der Parallellinien}, written in 1766, he developed the bases of a geometry in which all the Euclidean postulates hold except the parallel postulate which is replaced by its negation. His hope was to arrive to a contradiction, which would show that Euclid's parallel postulate is a consequence of the other Euclidean postulates. Instead of leading to a contradiction, Lambert's work turned out to be a collection of results in hyperbolic geometry, to which belongs the result that we present here. We refer the reader to \cite{Lambert-Blanchard} for the first translation of this work originally written in old German, together with a mathematical commentary. Lambert was self-taught (he left school at the age of eleven), and he eventually became one of the greatest and most universal minds of the eighteenth century. Euler had a great respect for him, and he helped him joining the Academy of Sciences of Berlin, where Lambert worked during the last ten years of his life. One of Lambert's achievements is that $\pi$ is irrational. He also conjectured that $\pi$ is transcendental (a result which was obtained a hundred years later).} contained in his \emph{Theory of parallel lines}, cf. \cite{Lambert-Blanchard}, \S 77. This result says that in an equilateral triangle $ABC$, if $D$ is the midpoint of $BC$ and $O$ the intersection point of the medians, we have $OD=\frac{1}{3} AD$,  $OD>\frac{1}{3} AD$, $OD<\frac{1}{3} AD$   in Euclidean, spherical, hyperbolic geometry  respectively.
In fact, we shall obtain a more precise relation between the lengths involved. The hyperbolic case will follow from the following proposition:
 
\begin{proposition}
Let  $ABC$ be an equilateral triangle in the hyperbolic plane and let $D,E,F$ be the midpoints of $BC, AC, AB$, respectively (Figure \ref{fig:triangle}). Then the
lines $AD, BE, CF$ intersect at a common point $O$ satisfying
$$\frac{\tanh AO}{\tanh OD}=\frac{\tanh BO}{\tanh OE}=\frac{\tanh CO}{\tanh OF}=2.$$
In particular, 
$$\frac{AD}{OD}=\frac{BE}{OE}=\frac{CF}{OF}>3.$$
\end{proposition}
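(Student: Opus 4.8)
The plan is to reduce the equality to Euler's relation (Theorem \ref{thm:hyperbolic}) by exploiting the threefold symmetry of the equilateral triangle, and then to treat the final strict inequality separately using the concavity of $\tanh$.

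First I would establish the concurrency and the equalities of lengths from symmetry. The equilateral triangle $ABC$ admits an isometry group containing the order-three rotation $\rho$ sending $A\mapsto B\mapsto C\mapsto A$; since $\rho$ carries midpoints to midpoints it cyclically permutes the three medians $AD, BE, CF$. The reflection fixing $A$ and exchanging $B,C$ has the median $AD$ as its axis (in the equilateral case the median, angle bisector and perpendicular bisector coincide), and likewise for the other two medians. The common fixed point $O$ of the whole symmetry group therefore lies on all three axes, so the medians are concurrent at $O$, and $\rho$ forces $AO=BO=CO$ and $OD=OE=OF$. In particular the three Euler ratios coincide: with $\alpha=\tanh AO/\tanh OD$, $\beta=\tanh BO/\tanh OE$, $\gamma=\tanh CO/\tanh OF$ we have $\alpha=\beta=\gamma$.

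Next I would substitute this into Euler's identity. Theorem \ref{thm:hyperbolic} gives $\alpha\beta\gamma=\alpha+\beta+\gamma+2$, which with $\alpha=\beta=\gamma$ becomes the cubic $\alpha^{3}-3\alpha-2=0$. This factors as $(\alpha-2)(\alpha+1)^{2}=0$, and since $\alpha=\tanh AO/\tanh OD>0$ the only admissible root is $\alpha=2$. This proves the first assertion, that $\tanh AO/\tanh OD=\tanh BO/\tanh OE=\tanh CO/\tanh OF=2$.

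Finally, for the ``in particular'' statement I would note that $O$ lies between $A$ and $D$, so $AD=AO+OD$ and hence $AD/OD=1+AO/OD$; it therefore suffices to prove the strict inequality $AO>2\,OD$. This is exactly where hyperbolic geometry enters and is the main point of the argument: the ratio relation only says $\tanh AO=2\tanh OD$, which in the Euclidean limit would give $AO=2\,OD$ and the sharp value $3$. To upgrade it I would use the double-angle formula $\tanh(2\,OD)=\dfrac{2\tanh OD}{1+\tanh^{2}OD}<2\tanh OD=\tanh AO$; since $\tanh$ is strictly increasing this yields $2\,OD<AO$, whence $AD/OD=1+AO/OD>3$, and by symmetry the same holds for $BE/OE$ and $CF/OF$. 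The only delicate step is this last one: one must resist reading $AO/OD=2$ directly off the $\tanh$-ratio and instead invoke the strict concavity of $\tanh$ (equivalently the displayed double-angle inequality) to obtain the strict, and correctly oriented, inequality.
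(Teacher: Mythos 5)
Your proposal is correct and follows essentially the same route as the paper: symmetry gives concurrency and $\alpha=\beta=\gamma$, Euler's hyperbolic relation (Theorem \ref{thm:hyperbolic}) yields the cubic $\alpha^3=3\alpha+2$ whose only positive root is $2$, and the strict inequality $AD/OD>3$ comes from $2\tanh x>\tanh(2x)$ for $x>0$. Your write-up merely makes explicit two points the paper leaves implicit — the factorization $(\alpha-2)(\alpha+1)^2$ and the reduction $AD/OD=1+AO/OD$ — so there is nothing to add.
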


\begin{proof}
The fact that the
lines $AD, BE, CF$ intersect at a common point $O$ follows from the symmetry of the equilateral triangle. 

Let us set
$$\alpha=\frac{\tanh AO}{\tanh OD},\beta=\frac{\tanh BO}{\tanh OE},\gamma=\frac{\tanh CO}{\tanh OF}.$$
Then, again by symmetry, $\alpha=\beta=\gamma$. By the hyperbolic version of Euler's Theorem (Theorem \ref{thm:hyperbolic}), we have
$$\alpha^3=3\alpha+2.$$
This implies that $\alpha=\beta=\gamma=2$.

To see that $\frac{AD}{OD}>3$ (or, equivalently, $\frac{AO}{OD}>2$), it suffices to
check that $\tanh AD=2\tanh OD> \tanh (2 OD)$. This follows from the inequality
$$2\tanh x > \tanh (2x), \forall \ x>0.$$
\end{proof}

An analogous proof shows that in the Euclidean case, and with the same notation, we have 
$\frac{AD}{OD}=3$ and in the spherical case, we have $\frac{AD}{OD}<3$. 

\bigskip

\section{Hyperbolic triangles with the same area}

In this section, we will study the following question:

\begin{quote}\emph{Given two distinct points $A,B\in \mathbb{H}^2$, determine the set of points $P\in \mathbb{H}^2$ such that the area of the triangle with vertices $P,A, B$ is equal to some given constant.}
\end{quote}

The question in the case of a spherical triangle was solved by Lexell \cite{Lexell-Solutio} and Euler \cite{Euler-Variae-T}.\footnote{Despite the difference in the dates of publication, the papers of Euler and Lexell were written the same year.}
We provide a proof for the case of a hyperbolic triangle.

Let us note that the analogous locus in the Euclidean case is in Euclid's \emph{Elements} (Propositions 37 and its converse, Proposition 39, of Book I). In this case, the locus consists of a pair of lines parallel to the basis. In spherical and hyperbolic geometries, the locus does not consist of lines (that is, geodesics) but of hypercycles (equidistant loci to lines) that pass by the points antipodal to the basis of the triangle. Also note that these hypercycles are not equidistant to the line containing the base of the triangle. The two hypercycles are equidistant to two distinct lines. 

This theorem in the spherical case, has an interesting history. Both Euler and his student Lexell gave a proof in \cite{Euler-Variae-T} (published in 1797) and \cite{Lexell-Solutio} (published in 1784).\footnote{We already noted that the two memoirs were written in the same year. Euler says that the idea of the result was given to him by Lexell.} Jakob Steiner published a proof of the same theorem in 1827 \cite{Steiner}, that is, several decades after Euler and Lexell. In 1841, Steiner published a new proof \cite{Steiner3}. In the same paper, he says that Liouville, the editor of the journal in which the paper appeared, and before he presented the result at the  Academy of Sciences of Paris, looked into the literature and found that Lexell already knew the theorem.  Steiner mentions that the theorem was known, ``at least in part", to Lexell, and then to Legendre. He does not mention Euler. Steiner adds:  ``The application of the theorem became easy only after the following complement: \emph{the circle which contains the   triangles with the same area passes through the points antipodal to the extremities of the bases}." In fact, this ``complement" is contained in Lexell's proof.  Legendre gives a proof of the same theorem in his \emph{\'El\'ements de g\'eom\'etrie}, \cite{Legendre} Note X, Problem III. His solution is based on spherical trigonometry, like one of Euler's. In 1855, Lebesgue gave a proof of this theorem \cite{Lebesgue1855}, which in fact is Euler's proof. At the end of Lebesgue's paper, the editor of the journal adds a comment, saying that one can find a proof of this theorem in the \emph{\'El\'ements de G\'eom\'etrie} of Catalan (Book VII, Problem VII), but no reference is given to Euler.

In the rest of this section, we prove the hyperbolic analogue of this theorem. We shall use the unit disc model of the hyperbolic plane. Up to an isometry, we may assume that the two vertices
$A$ and $B$ lie on the real line, with $0<A=-B<1$ (that is, $A$ and $B$ are symmetric with respect to the origin). This will simplify the notation and will make our discussion clearer. We assume that the hyperbolic distance between $A$ and $B$ is of the form $2x$.

\subsection{Example: A family of triangles with increasing areas}\label{sec:example}

Let us denote the center of the unit disc by $O$.
We first consider the case where the vertex $P$ lies on the geodesic that goes through $O$ perpendicularly to the real line. We denote the
hyperbolic distance between $O$ and $P$ by $y$, and the hyperbolic distance between $A$ and $P$ by $c$.  See Figure \ref{fig:apb}.

\begin{figure}[htbp]

\centering

\includegraphics[width=10cm]{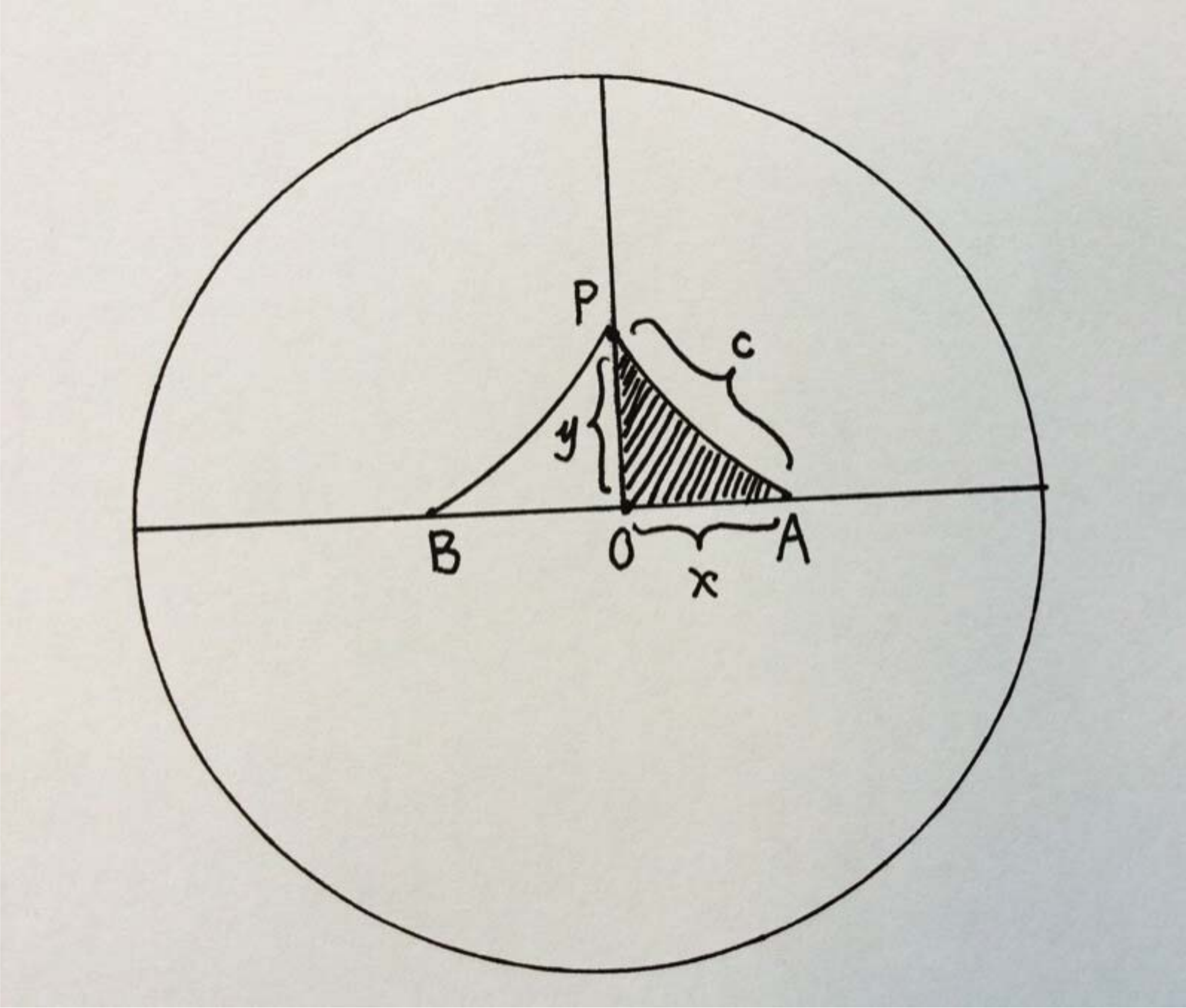}

\caption{The example $APB$} \label{fig:apb}

\end{figure}

Since the distances $x=d(O,A)=d(O,B)$ are fixed, we may
consider the area of the triangle $APB$ as a function of $y$. This area is the double of the area of $APO$.  We start with the following:

\begin{proposition}\label{Pro:area}
The area of $APO$
is an increasing function of $y$.
\end{proposition}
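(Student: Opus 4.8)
First I would record that $APO$ is a \emph{right} triangle: since $P$ lies on the geodesic through $O$ orthogonal to the real line and $A$ lies on the real line, the angle $\widehat{AOP}$ equals $\pi/2$. Its legs are $OA=x$, which is fixed, and $OP=y$, which is the variable. The area of a hyperbolic triangle is its angular defect, so
\[
S(y)=\frac{\pi}{2}-\widehat{OAP}-\widehat{OPA},
\]
and the proposition is equivalent to showing that the sum of the two acute angles is a strictly decreasing function of $y$.

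\textbf{Main approach (additivity of area).} The cleanest route avoids trigonometry altogether. Fix $0<y_1<y_2$ and let $P_1,P_2$ be the corresponding points on the perpendicular ray issued from $O$, so that $O,P_1,P_2$ are collinear with $P_1$ strictly between $O$ and $P_2$. The cevian $AP_1$ subdivides the triangle $AOP_2$ into the two triangles $AOP_1$ and $AP_1P_2$. Area (the angular defect) is additive under such a subdivision: adding the three defects, the contributions at $A$, at $O$, and at $P_2$ recombine into the angles of $AOP_2$, while at $P_1$ one uses $\widehat{AP_1O}+\widehat{AP_1P_2}=\pi$, so that the two $\pi$'s from the small triangles collapse to one. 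Hence
\[
\mathrm{area}(AOP_2)=\mathrm{area}(AOP_1)+\mathrm{area}(AP_1P_2)>\mathrm{area}(AOP_1).
\]
Because $A$ does not lie on the geodesic $OP_2$, the triangle $AP_1P_2$ is nondegenerate and its area is strictly positive; therefore $S(y_2)>S(y_1)$, which is exactly strict monotonicity.

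\textbf{Quantitative alternative and the obstacle.} If one wants the area explicitly, I would instead use the right-triangle relations $\tan\widehat{OAP}=\tanh y/\sinh x$ and $\tan\widehat{OPA}=\tanh x/\sinh y$, which follow from the sine and cosine laws exactly as in Lemma \ref{lem:ratio}, together with $\widehat{OAP}+\widehat{OPA}+S(y)=\pi/2$, to derive the compact identity
\[
\tan\frac{S(y)}{2}=\tanh\frac{x}{2}\,\tanh\frac{y}{2}.
\]
Since $\tanh(x/2)$ is a fixed positive constant and $\tanh(y/2)$ is strictly increasing for $y>0$, the right-hand side is strictly increasing in $y$; as $S(y)\in(0,\pi/2)$ and $\tan$ is increasing there, monotonicity of $S$ follows at once. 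With either of these two arguments there is essentially no obstacle. The difficulty surfaces only if one insists on a direct calculus proof from the two $\arctan$ expressions for the angles: there the angle at $A$ increases while the angle at $P$ decreases as $y$ grows, so $S'(y)>0$ reduces to the inequality
\[
\frac{\tanh x\,\cosh y}{\sinh^2 y+\tanh^2 x}>\frac{\sinh x}{\cosh^2 y\,(\sinh^2 x+\tanh^2 y)},
\]
which is true but unpleasant to verify by hand. The point of the half-angle identity (or, better, of the additivity argument) is precisely to make this sign transparent, so I expect the main work to lie in \emph{choosing} the slick route rather than in any single computation.
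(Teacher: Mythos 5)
Your proof is correct, but it takes a genuinely different route from the paper's. The paper argues entirely by hyperbolic trigonometry: writing $\alpha=\widehat{APO}$, $\beta=\widehat{PAO}$, it combines the cosine and sine laws to obtain the closed form
\[
\sin(\alpha+\beta)=\frac{(\cosh x\cdot\cosh y-1)(\cosh x+\cosh y)}{(\cosh x\cdot\cosh y)^2-1},
\]
then sets $u=\cosh y$ and checks by differentiation that this rational expression is strictly decreasing in $u$, so the defect $\frac{\pi}{2}-(\alpha+\beta)$ increases. (That last step implicitly uses $\alpha+\beta<\pi/2$, so that a decreasing sine forces a decreasing angle sum --- a subtlety both of your arguments bypass.) Your main argument --- subdividing $AOP_2$ by the cevian $AP_1$, checking that the defect is additive under the subdivision, and using strict positivity of the defect of $AP_1P_2$ --- is more elementary and yields strict monotonicity with essentially no computation; it is a perfectly valid proof of the proposition as stated. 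What the paper's longer computation buys, however, is the explicit expression of the area as $\arccos$ of the displayed quantity, and this is not incidental: that formula is reused later in the section, in the expressions for $\Delta_1(t)$ and $\Delta_2(t)$, in the limiting ideal-triangle computation, and in the proof of Theorem \ref{thm:area}, where the common area of triangles with vertex on $\mathcal{C}'$ is written as $2\arccos(\cdots)$. Your quantitative alternative would fill that role equally well: the relations $\tan\widehat{OAP}=\tanh y/\sinh x$ and $\tan\widehat{OPA}=\tanh x/\sinh y$ are correct, and the half-angle identity $\tan\bigl(S(y)/2\bigr)=\tanh(x/2)\tanh(y/2)$ is a true classical formula (it can be derived from the paper's expressions for $\sin(\alpha+\beta)$ and $\cos(\alpha+\beta)$ in a few lines) which makes monotonicity transparent. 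So the trade-off is: your synthetic argument is the cleanest proof of monotonicity alone, while the paper's computation simultaneously manufactures the precise area formula on which the rest of the section depends.
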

\begin{proof} We use hyperbolic trigonometry. 
By the cosine law for a hyperbolic triangle, we have
$$\cos \widehat{AOP} = \frac{\cosh x \cdot \cosh y-\cosh c}{\sinh x \cdot \sinh y}.$$
Since $\widehat{AOP}=\frac{\pi}{2}$, $\cos\widehat{AOP}=0$. We have

$$\cosh c=\cosh x \cdot \cosh y.$$

Denote $\widehat{APO}=\alpha, \widehat{PAO}=\beta$. Using  again the cosine law for hyperbolic triangles, we obtain

$$\cos \alpha = \frac{\cosh y \cdot \cosh c - \cosh x}{\sinh y \cdot \sinh c}, \
\cos \beta = \frac{\cosh x \cdot \cosh c - \cosh y}{\sinh x \cdot \sinh c}.$$

Using the sine law for hyperbolic triangles, we have (note that $\sin \widehat{AOP}=1$)

$$\sin \alpha= \frac{\sinh x}{\sinh c}, \  \sin \beta = \frac{\sinh y}{\sinh c}.$$

Applying the above equations, we have

\begin{eqnarray*}
\sin(\alpha+\beta)&=& \sin \alpha \cdot \cos\beta + \sin\beta \cdot \cos\alpha \\
&=& \frac{\sinh x}{\sinh c} \cdot \frac{\cosh x \cdot \cosh c - \cosh y}{\sinh x \cdot \sinh c} +
\frac{\sinh y}{\sinh c} \cdot \frac{\cosh y \cdot \cosh c - \cosh x}{\sinh y \cdot \sinh c} \\
&=& \frac{(\cosh c-1)(\cosh x + \cosh y)}{(\sinh c)^2} \\
&=&  \frac{(\cosh x \cdot \cosh y-1)(\cosh x + \cosh y)}{(\cosh x \cdot \cosh y)^2-1}. \\
\end{eqnarray*}

We set $u=\cosh y>1$ and write the right-hand side of the above equation as

\begin{eqnarray*}
f(u) &=&  \frac{(\cosh x \cdot u-1)(\cosh x + u)}{(\cosh x \cdot u)^2-1} \\
&=& \frac{\cosh x \cdot u^2 + (\sinh x)^2 \cdot u -\cosh x}{(\cosh x)^2 \cdot  u^2 -1}.
\end{eqnarray*}

By a calculation, we have

$$f'(u)=-(\sinh x)^2  \cdot \frac{1}{ (\cosh x \cdot  u+1)^2}< 0.$$

This shows that $\sin(\alpha+\beta)$ is a decreasing function of $y$. Since the area of $APO$ is
given by $\frac{\pi}{2}-(\alpha+\beta)$, it is an increasing function of $y$.
\end{proof}

More generally, consider, instead of the positive $y$-axes,  an  arbitrary geodesic ray $\Gamma(t),t\in [0,\infty)$ initiating from the real line perpendicularly.
Geodesics are parameterized by arc-length
We denote by $F\in (-1,1)$ the initial point of $\Gamma(t)$, i.e. $F=\Gamma(0)$. This point is the foot of $\Gamma(t)$ on the real line, for any $t\in [0,\infty)$. We denote by $a$ the (hyperbolic) distance between $O$ and $F$.
Let us assume for the proof that $F\in [B,A]$. The area of the triangle is the sum of the areas of
$AF\Gamma(t)$ and $BF\Gamma(t)$. The proof of  Proposition  \ref{Pro:area} applied to each of these triangles   shows that the area of the hyperbolic triangle with vertices $A, B, \Gamma(t)$
is an increasing function of $t$.

The triangle $AB\Gamma(t)$ is naturally separated  by $F\Gamma(t)$ into two right triangles. (When $F$ coincides with $A$ or $B$,
we consider that one of the triangles is of area $0$.) Denoting by $\Delta_1(t)$ and $\Delta_2(t)$ the areas of these two right
triangles, we have

\begin{equation}\label{equ:area}
\Delta_1(t)=\arccos \left(  \frac{\left(\cosh (x-a) \cdot \cosh t-1)(\cosh (x-a) + \cosh t\right)}{\left(\cosh (x-a) \cdot \cosh t\right)^2-1} \right)
\end{equation}

\begin{equation}\label{equ:area}
\Delta_2(t)=\arccos \left(  \frac{\left(\cosh (x+a) \cdot \cosh t-1)(\cosh (x+a) + \cosh t\right)}{\left(\cosh (x+a) \cdot \cosh t\right)^2-1}\right).
\end{equation}

We have shown that both $\Delta_1(t)$ and $\Delta_2(t)$ are strictly increasing functions of $t$.
By making $t\to\infty$, we have
$$\lim_{t\to\infty}\left( \Delta_1(t)+\Delta_2(t)\right)= \arccos \left( \frac{1}{\cosh (x-a)}\right) + \arccos \left( \frac{1}{\cosh (x+a)}\right).$$
The limit is the area of the ideal triangle with vertices $A,B,\Gamma(\infty)$.

\subsection{The locus of vertices of triangles with given base and area}

As in \S \ref{sec:example}, we assume that $A,B\in \mathbb{H}$ are on the real line and are symmetric with respect to
the imaginary axis. For any $P\in \mathbb{H}$, we denote by $\Delta(P)$ the area of the hyperbolic triangle  $APB$.
Let $\mathcal{L}(P)$  be the set of points $Z$ in $\mathbb{H}$ such that the area of the triangle $AZB$ is equal to $\Delta(P)$.

We recall that a \emph{hypercycle} $\mathcal{C}$ in $\mathbb{H}$  is a bi-infinite curve in $\mathbb{H}$
 whose points are equidistant from a given geodesic.
In the unit disc model of the hyperbolic plane, $\mathcal{C}$ is represented by an arc of circle that intersects the boundary circle at non-right angles. The horocycle $\mathcal{C}$ and its associated geodesic intersect the boundary circle in the same points, and the geodesic makes right angles with the unit circle at these points.  The angle that $\mathcal{C}$ makes with the unit circle is right if and only if this hypercycle coincides with the associated geodesic.  We shall denote the geodesic associated to $\mathcal{C}$ by $\mathcal{G}$. There is another hypercycle, on the other side of $\mathcal{G}$, with the same distance, which will be denoted by $\mathcal{C}'$. We need only consider hypercycles that
are symmetric with respect to the imaginary axes.

With the above notation, we can state our main result.

\begin{theorem}\label{thm:area}
For any $P\in \mathbb{H}$, there is a unique hypercycle $\mathcal{C}$ that passes through $A,B$ such that
$\mathcal{C}'$ is one of the two connected components of the locus $\mathcal{L}(P)$ of vertices $Z$ of triangles $ABZ$ having the same area as $ABP$. 
\end{theorem}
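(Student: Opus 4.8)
The plan is to compute the area $\Delta(Z)$ of a triangle $ABZ$ as an explicit function of the vertex $Z$, and then show that the level set $\{Z : \Delta(Z) = \Delta(P)\}$ is exactly a hypercycle $\mathcal{C}'$ of the asserted type. Working in the unit disc model with $A = -B \in (0,1)$ symmetric about the imaginary axis, I would first exploit the symmetry: the configuration is invariant under reflection in the imaginary axis, so the locus $\mathcal{L}(P)$ must be symmetric about that axis as well. This immediately suggests restricting attention, as the paper does, to hypercycles symmetric about the imaginary axis, and organizing the computation around the perpendicular foot $F$ dropped from $Z$ to the line $AB$ and the perpendicular distance $t = d(Z, AB)$.

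**Reducing to the monotonicity already established.** The key input is the analysis from Section~\ref{sec:example}: once $Z$ is written via its foot $F$ on the real axis (with signed coordinate $a$) and its height $t$, the area is $\Delta_1(t) + \Delta_2(t)$, a strictly increasing function of $t$ for each fixed $a$, with an explicit limit $\arccos\!\big(\tfrac{1}{\cosh(x-a)}\big) + \arccos\!\big(\tfrac{1}{\cosh(x+a)}\big)$ as $t \to \infty$. Strict monotonicity in $t$ is what gives \emph{uniqueness} of the height realizing a prescribed area over each vertical fiber, hence the claim that each connected component of $\mathcal{L}(P)$ is a graph $t = t(a)$ over the base line. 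So first I would record that for each fixed foot $a$ the equation $\Delta_1(t)+\Delta_2(t) = \Delta(P)$ has at most one solution $t > 0$, and it has a solution precisely when $\Delta(P)$ is below the ideal-triangle bound; this pins down one point of the locus above each admissible foot and yields two symmetric components (corresponding to $Z$ above or below the line $AB$).

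**Identifying the locus as a hypercycle through the antipodes.** The heart of the argument, and the main obstacle, is to show that this graph is genuinely a hypercycle and to locate it. Here I would pass to a trigonometric identity for the total area and try to rewrite the equation $\Delta(Z) = \text{const}$ in a form recognizable as ``constant distance from a fixed geodesic.'' The cleanest route is to use the defect formula: the area equals $\pi$ minus the angle sum, so fixing the area is equivalent to fixing the angle $\widehat{AZB}$ subtended at $Z$ by the fixed base $AB$. The locus of points seeing a fixed segment under a constant angle is a classical object; in the hyperbolic plane this locus is not a circular arc (as in the Euclidean case) but an arc of a hypercycle, and the circle in the disc model carrying it must meet the boundary at the images of $A$ and $B$ under the appropriate extension. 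I would then verify directly, in the disc model, that this circular arc passes through the two points \emph{antipodal} to $A$ and $B$ (the diametrically opposite boundary points $-1/A$-type ideal reflections, more precisely the endpoints forced by the symmetry and the constant-angle condition), which is exactly the condition that its companion hypercycle $\mathcal{C}'$ on the other side of the associated geodesic $\mathcal{G}$ passes through $A$ and $B$.

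**Assembling the statement.** Finally I would check the matching of the two hypercycles: the theorem asserts existence of a \emph{unique} hypercycle $\mathcal{C}$ through $A$ and $B$ whose reflected partner $\mathcal{C}'$ is a component of $\mathcal{L}(P)$. Uniqueness follows from the strict monotonicity step, since the area determines the constant subtended angle, which determines the circle in the disc model through the two fixed ideal points, hence determines $\mathcal{C}$ and $\mathcal{C}'$ uniquely. The main delicate point I anticipate is the clean identification of ``constant subtended angle'' with ``arc of a circle through the antipodal boundary points'' and verifying the non-right-angle intersection with the boundary circle that certifies $\mathcal{C}$ is a hypercycle rather than a geodesic; this is where an explicit disc-model computation, rather than a synthetic argument, seems unavoidable, and it is the step I would invest the most care in.
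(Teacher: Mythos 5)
Your opening step---using the monotonicity of $\Delta_1(t)+\Delta_2(t)$ from \S\ref{sec:example} to show that each perpendicular fiber over the line $AB$ meets the locus in at most one point---is sound, and it is exactly how the paper obtains uniqueness. But the step you yourself identify as the heart of the argument fails. The defect formula gives
\[
\Delta(Z)=\pi-\bigl(\widehat{ZAB}+\widehat{ZBA}+\widehat{AZB}\bigr),
\]
so fixing the area fixes the \emph{sum} of the three angles, not the angle $\widehat{AZB}$ subtended at $Z$; the angles at $A$ and $B$ vary as $Z$ moves. The two conditions are genuinely different: as $Z$ tends along a component of the locus (a hypercycle $\mathcal{C}'$) to one of its ideal endpoints $\xi$, the triangle $ABZ$ tends to the ideal triangle $AB\xi$, whose area is the common value, while the angle at $Z$ tends to $0$; if the angle at $Z$ were constant along the locus it would be identically zero, which is absurd. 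So ``constant area'' is not ``constant subtended angle,'' and no inscribed-angle-type identification can carry the proof. Compounding this, the points ``antipodal to $A$ and $B$'' that you propose to verify the arc passes through do not exist in the hyperbolic plane; the paper explicitly notes that the spherical fact about antipodal points \emph{has no immediate analogue in the hyperbolic case}.

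What the paper does at this step is synthetic rather than computational, and it is worth contrasting with your plan. Given a hypercycle $\mathcal{C}$ through $A,B$ symmetric about the imaginary axis, with associated geodesic $\mathcal{G}$ and reflected hypercycle $\mathcal{C}'$, one uses the fact that every geodesic arc joining a point of $\mathcal{C}$ to a point of $\mathcal{C}'$ is bisected by $\mathcal{G}$. Hence for $P,Q\in\mathcal{C}'$, letting $R$ be the intersection of $PA$ and $QB$ (which lies on $\mathcal{G}$), the triangles $PRA$ and $QRB$ have equal area, and a cut-and-paste argument gives $\Delta(P)=\Delta(Q)$: the area is \emph{constant} on each $\mathcal{C}'$. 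Combining this with the monotonicity of \S\ref{sec:example} (the curves $\mathcal{C}'$ foliate the disc, and the area value is strictly monotone across the leaves), each level set of $\Delta$ is exactly one symmetric pair of such hypercycles, which is Theorem \ref{thm:area}. To repair your outline you would need to replace the constant-angle step by an argument of this kind, or by a direct trigonometric verification that $\Delta$ is constant along $\mathcal{C}'$; the fiberwise uniqueness part of your plan can then stand as is.
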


Theorem \ref{thm:area} is illustrated in Figure \ref{fig:cycle}.

\begin{figure}[htbp]

\centering

\includegraphics[width=10cm]{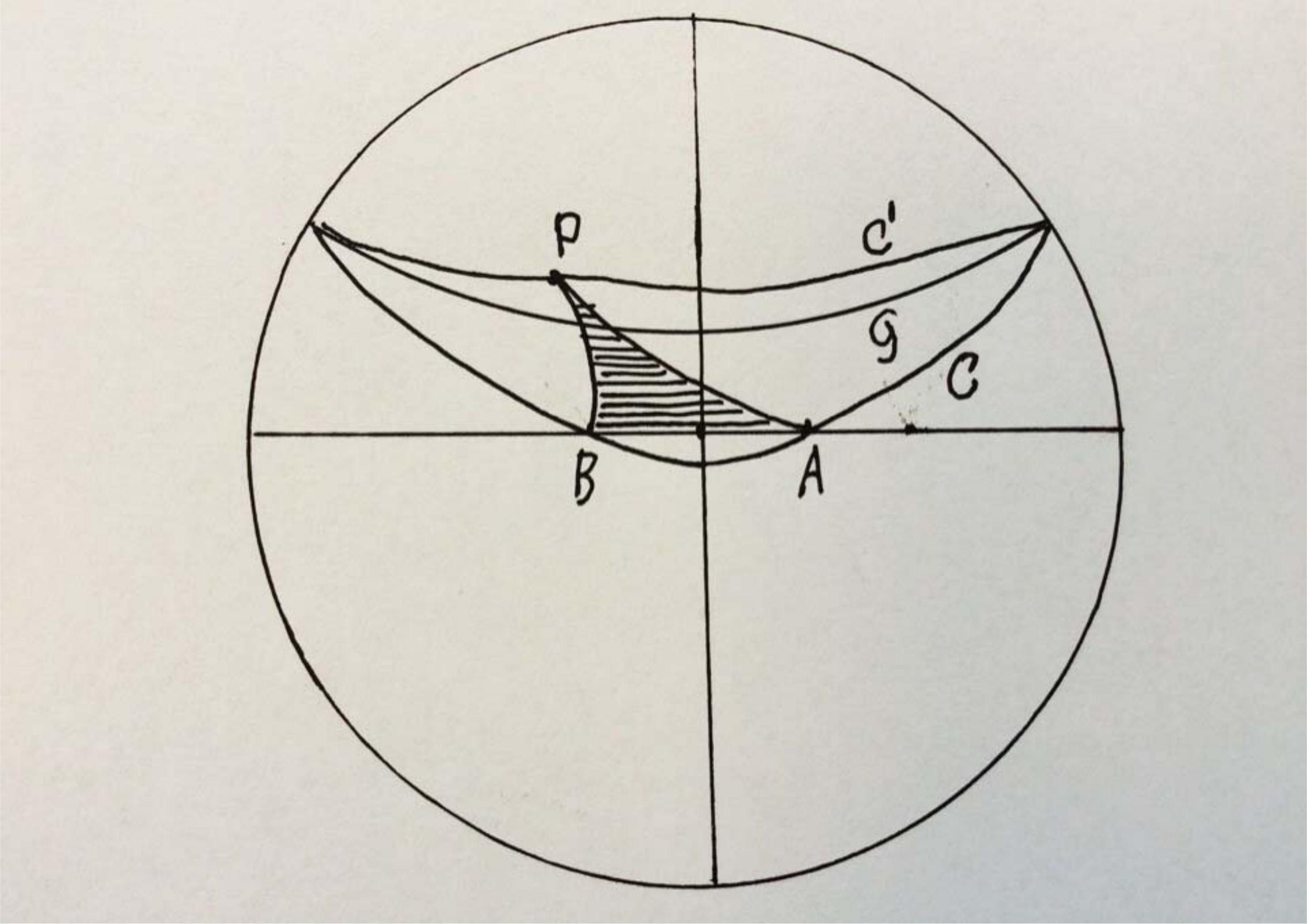}

\caption{The hypercycle $\mathcal{C}$ and $\mathcal{C}'$. When the point $P$ describes the hypercycle $\mathcal{C}'$, the area $\Delta(P)$ is a constant.} \label{fig:cycle}

\end{figure}

\begin{figure}[htbp]

\centering

\includegraphics[width=8cm]{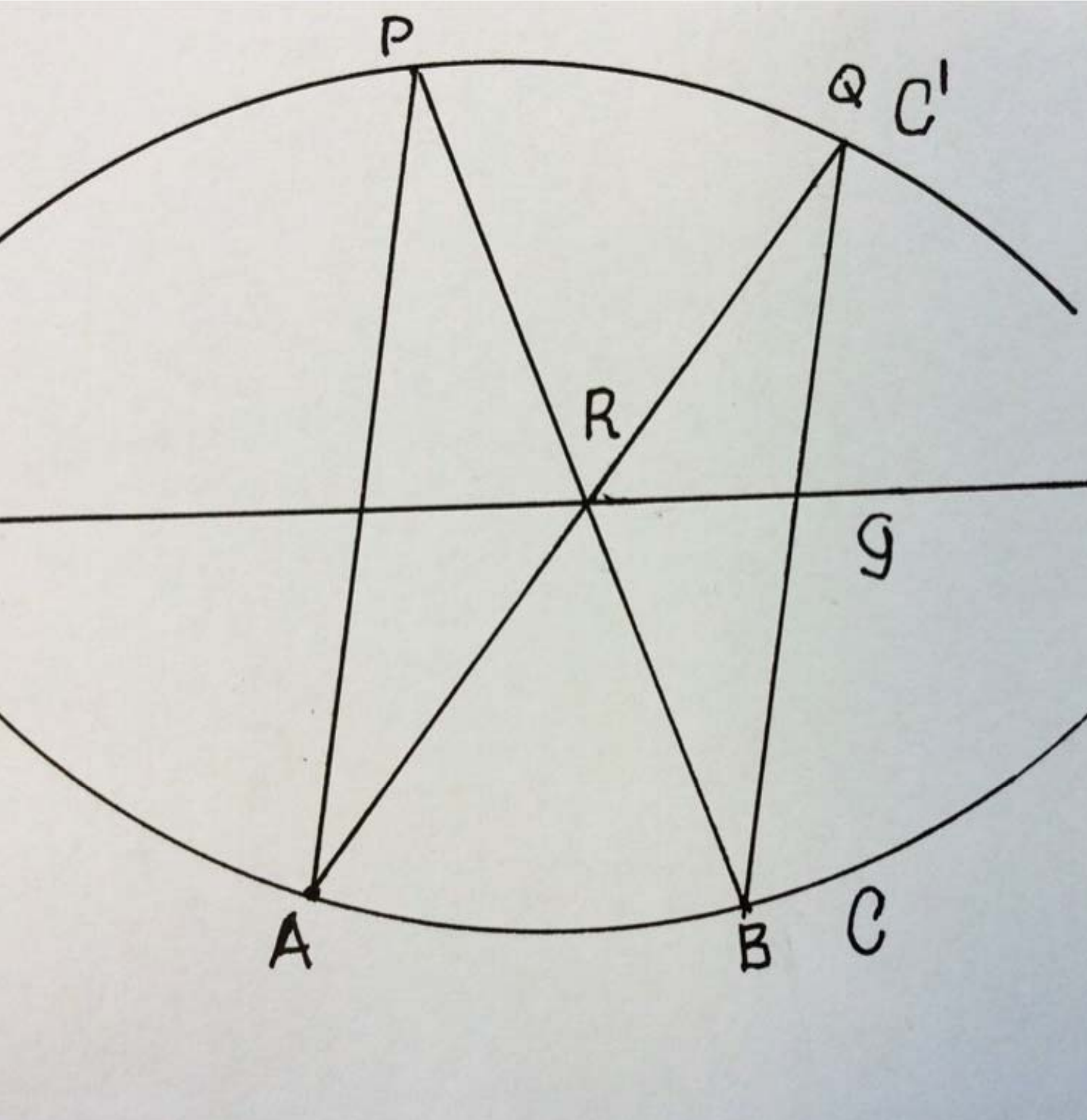}

\caption{The triangle $PRA$ and $QRB$ have the same area.} \label{fig:area}

\end{figure}

\begin{figure}[htbp]

\centering

\includegraphics[width=10cm]{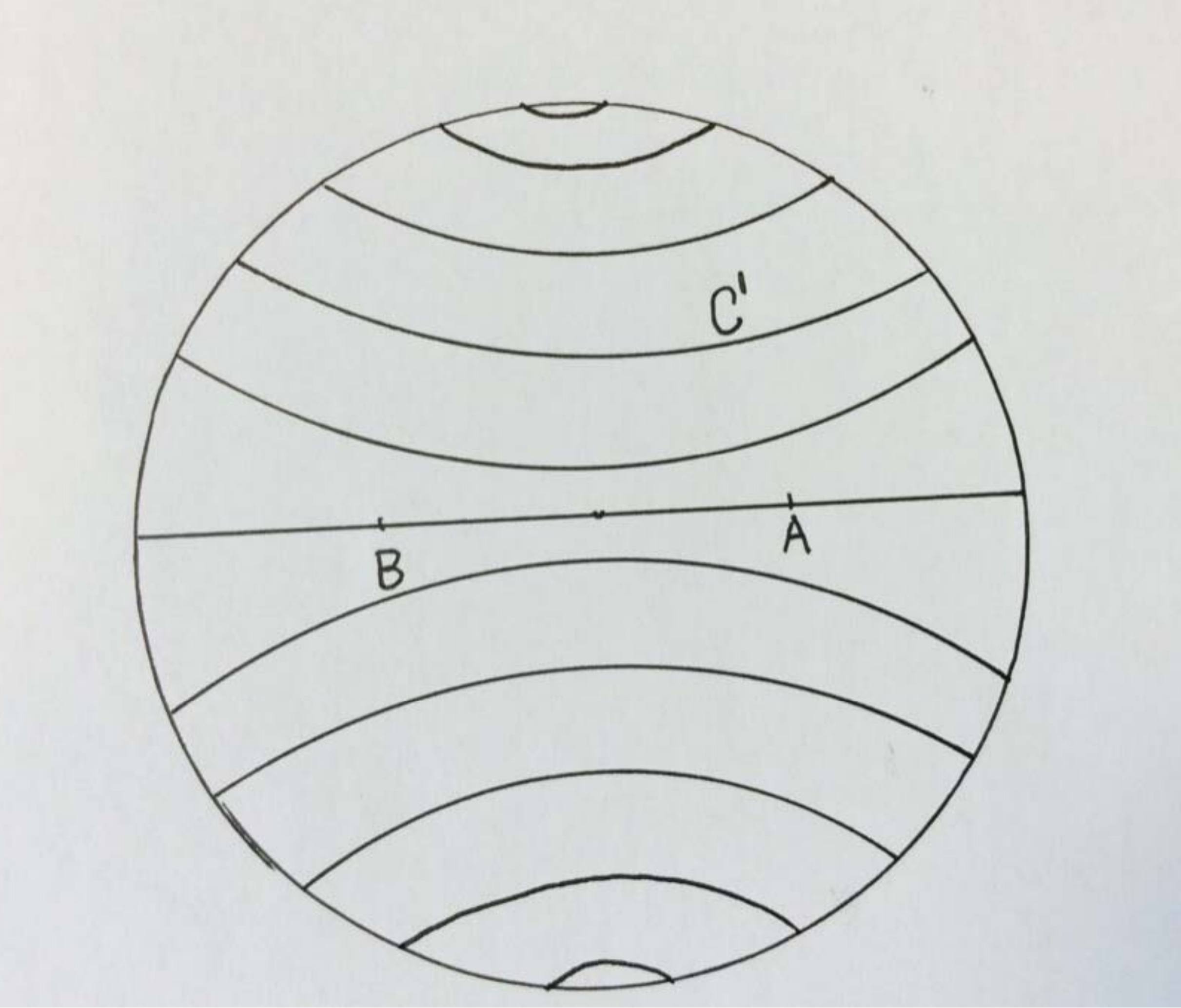}

\caption{The foliation consists of leaves as locus of vertices with the same triangle area.} \label{fig:foliation}

\end{figure}

\begin{proof}[Proof of Theorem \ref{thm:area}]
Consider any hypercycle $\mathcal{C}$ passing through $A,B$ and intersecting the imaginary axis perpendicularly.
As noticed before, there is a unique geodesic $\mathcal{G}$ equidistant to $\mathcal{C}$. There is another hypercycle $\mathcal{C}'$ that is symmetric to $\mathcal{C}$ with respect to $\mathcal{G}$.

It is a simple fact that any geodesic arc connecting a point in $\mathcal{C}$ and a point in $\mathcal{C}'$ is cut by $\mathcal{G}$ into two sub-arcs of the same hyperbolic length.  In particular, as shown in Figure \ref{fig:area},
for any two points $P, Q$ on $\mathcal{C}'$, the geodesic arcs $PA$ (and also $QB, QA, PB$) are separated by $\mathcal{G}$ into equal segments.
It follows that the area of $PRA$ is equal to the area of $QRB$. Here $R$ denotes the intersection of $PA$ and $QB$,
which necessarily lies on $\mathcal{G}$.
It is also not hard to see that the triangles $PAB$ and $QAB$ have the same area, that is, $\Delta(P)=\Delta(Q)$. The reader may refer to Theorem 5.9 of \cite{ACP}. (Notice that some of the sides of the triangles in Figure \ref{fig:area} on which we are reasoning are pieces of hypercycles instead of being geodesic segments, but the equality between areas remains true when such a piece of horocycle is replaced by the geodesic segment that joins its endpoints.)
Since the points $P,Q$ are arbitrarily chosen on $\mathcal{C}'$, we conclude that the area $\Delta(P)$ of the triangle $PAB$, with vertex  $P$ varying on $\mathcal{C}'$, is constant.

If the hypercycle $\mathcal{C}'$ intersects the imaginary axis at a point which is at distance $y$ from the center of the unit disc, then we showed in \S \ref{sec:example} that $\Delta(P)$, for any $P\in \mathcal{C}'$, is equal to

$$ 2\arccos \left( \frac{(\cosh x \cdot \cosh y-1)(\cosh x + \cosh y)}{(\cosh x \cdot \cosh y)^2-1}\right),$$
where $x$ is as in \ref{sec:example} the distance from $A$ to $O$.
Moreover, we showed  that the area is a increasing function of $y$.

With the above description, we have a fairly clear picture of the locus of vertices with the same triangle area.
For if we move the hypercycle $\mathcal{C}$ continuously in that disc, we get a family of hypercycles $\mathcal{C}'$. Such a family forms a foliation filling the unit disc. On each leaf, the area $\Delta(\cdot)$ is constant. On any two distinct leaves
which are not symmetric with respect to the real axes, the areas are different.
The theorem follows since any point $P\in \mathbb{H}$ lies on a unique leaf of the foliation, and the locus $\mathcal{L}(P)$ consists of
two components, which are symmetric with respect to the real axes.
\end{proof}

\begin{remark}
A limiting case is when the points $A$ and $B$ are both on the ideal boundary of the unit disc. In this case, it is easy to see that on any
hypercycle $\mathcal{C}$ asymptotic to the geodesic $AB$, the area $\Delta(\cdot)$ is constant.

In fact, given such a hypercycle and an arbitrary point $P$ on it, the perpendicular distance between $P$ and $AB$ is a constant $c$ and the geodesic arc realizing the distance between $P$ and $AB$ divides the triangle $PAB$ into two isometric right triangles,
each of which has area

\[\frac{\pi}{2}-\arctan\left( \frac{1}{\sinh c} \right).
\]
This formula follows from the following formula

It suffices to check the following formula (written in Figure \ref{fig:limit}):

\begin{equation}\label{eq:cos}
\sinh c=\frac{\cos\alpha+\cos\beta}{\sin\alpha \cdot \sin\beta}
\end{equation}

To see this, we recall the cosine law for a hyperbolic triangle  with angles $\alpha, \beta, \gamma=0$:

$$\cosh c= \frac{1+\cos\alpha \cdot \cos\beta}{\sin\alpha \cdot \sin\beta}$$
This implies that
\begin{eqnarray*}
\cosh^2c-1 &=& \frac{(1+\cos\alpha \cdot \cos\beta)^2}{\sin^2\alpha \cdot \sin^2\beta}-1 \\
&=& \frac{1+ \cos^2\alpha \cdot \cos^2\beta-\sin^2\alpha \cdot \sin^2\beta+ 2 \cos\alpha \cdot \cos\beta}{\sin^2\alpha \cdot \sin^2\beta}\\
&=&  \frac{1+ \cos^2\alpha \cdot \cos^2\beta-(1-\cos^2\alpha) \cdot (1-\cos^2\beta)+ 2 \cos\alpha \cdot \cos\beta}{\sin^2\alpha \cdot \sin^2\beta} \\
&=&  \frac{ (\cos\alpha + \cos\beta)^2}{\sin^2\alpha \cdot \sin^2\beta}
\end{eqnarray*}
As a result, we have Equation \ref{eq:cos}.

In the limiting case considered, the foliation whose leaves are loci of vertices with the same triangle area consists of hypercycles asymptotic to the geodesic $AB$,
see Figure \ref{fig:limit}. This foliation can be seen as a limit of the foliations constructed in the proof of Theorem \ref{thm:area}.
\end{remark}

\begin{figure}[htbp]

\centering

\includegraphics[width=10cm]{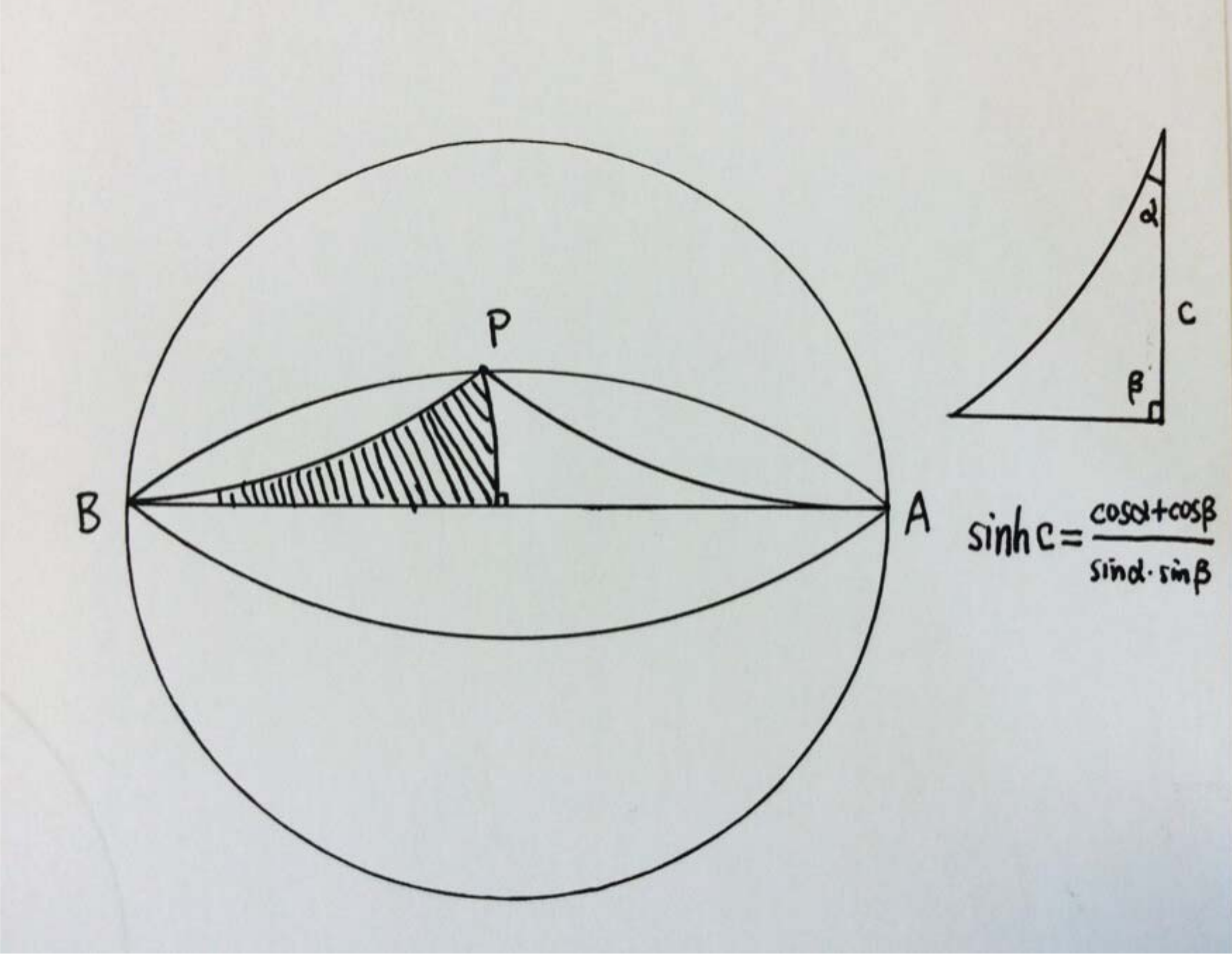}

\caption{The limiting case.} \label{fig:limit}

\end{figure}

\subsection{How to determine the hypercycle}
We conclude by setting up a construction of the hypercycle passing through $P$.

As illustrated in Figure \ref{fig:find}, when $P$ lies on the imaginary axis, we draw the geodesics from $P$ to $A$ and $B$.
To determine the hypercycle through $P$, we only need to determine the midpoint of the geodesics $PA$ and $PB$.
There is a unique geodesic passing through these two midpoints, and it has two endpoints on the ideal boundary. The hypercycle
with the same endpoints is the one we want.

When $P$ does not lie on the imaginary axis, we take the point $P'$ which is symmetric to $P$ with respect to the imaginary axes.
We draw the geodesics connecting $P$ to $A$ and  $P'$ to $B$.
To determine the hypercycle through $P$, we only need to determine the midpoints of the geodesics $PA$ and $P'B$.
There is a unique geodesic passing through the two midpoints, with two endpoints on the ideal boundary. The hypercycle
with the same endpoints is the one we want.

\begin{figure}[htbp]

\centering

\includegraphics[width=10cm]{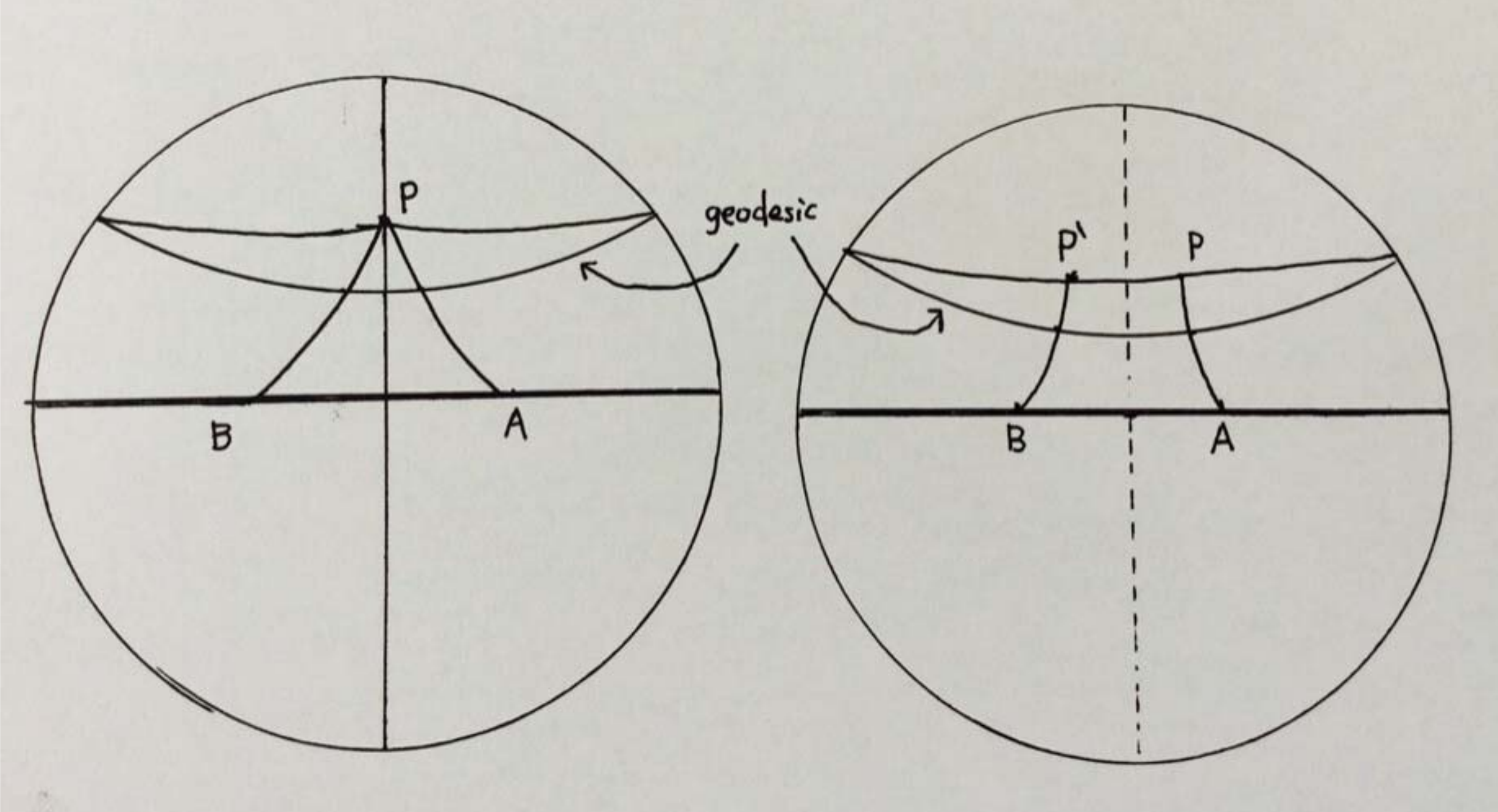}

\caption{To find the hypercycle.} \label{fig:find}

\end{figure}

We then have the following complement to Theorem \ref{thm:area}:

\begin{proposition}
The midpoints of the variable triangles that are on a given side of the line joining $A,B$ are all on a common line, and the locus of the vertices that we are seeking for is a hypercycle with basis that line.
\end{proposition}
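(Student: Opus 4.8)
The plan is to read this proposition as a \emph{complement} to Theorem \ref{thm:area}, repackaging the construction described just above it, so that the work already done does almost all of the job. Throughout I keep the normalization of the previous subsections: $A,B$ lie on the real axis, symmetric with respect to the imaginary axis, and I fix the common area $\Delta(P)$. By Theorem \ref{thm:area}, the component of the locus $\mathcal{L}(P)$ lying on the chosen side of the line $AB$ is a hypercycle $\mathcal{C}'$; let $\mathcal{G}$ be its base geodesic and let $\mathcal{C}$ be the hypercycle obtained by reflecting $\mathcal{C}'$ across $\mathcal{G}$. The content of Theorem \ref{thm:area} is precisely that this $\mathcal{C}$ passes through both $A$ and $B$. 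So the two fixed vertices sit on $\mathcal{C}$, while the variable vertex $P$ ranges over $\mathcal{C}'$.

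The single geometric input is the bisection fact already invoked in the proof of Theorem \ref{thm:area}: any geodesic segment joining a point of $\mathcal{C}$ to a point of $\mathcal{C}'$ is cut by $\mathcal{G}$ into two pieces of equal length. I would recall its one-line justification, since it is exactly what is needed here. For any point $M$ of $\mathcal{G}$, the half-turn (rotation by $\pi$) about $M$ is an isometry that preserves $\mathcal{G}$ setwise, interchanges its two sides, and preserves distance to $\mathcal{G}$; hence it carries $\mathcal{C}$ onto $\mathcal{C}'$. As $M$ runs over $\mathcal{G}$, the image of the fixed point $A\in\mathcal{C}$ runs over all of $\mathcal{C}'$, so for each $P\in\mathcal{C}'$ there is a unique $M\in\mathcal{G}$ whose half-turn sends $A$ to $P$; that $M$ is by construction the midpoint of the segment $AP$, and it lies on $\mathcal{G}$.

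Applying this with $A$, and then with $B$ (both on $\mathcal{C}$), I conclude that for every triangle $APB$ of the prescribed area with $P$ on the given side, the midpoints of the two sides $PA$ and $PB$ issuing from the variable vertex both lie on the single geodesic $\mathcal{G}$. This is the first assertion: the midpoints of the variable triangles lie on the common line $\mathcal{G}$. For the second assertion, the locus of the vertices $P$ is exactly $\mathcal{C}'$, which is by definition a hypercycle with base $\mathcal{G}$, completing the proof. The only real point requiring care---and the step I would present most carefully---is the identification that $A$ and $B$ lie on the reflected hypercycle $\mathcal{C}$; this is where Theorem \ref{thm:area} is used, and once it is in hand the bisection fact turns the statement into a routine consequence.
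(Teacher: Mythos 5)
Your proposal is correct and follows essentially the same route as the paper: the paper also treats this proposition as a direct repackaging of Theorem \ref{thm:area} together with the bisection fact that any geodesic arc joining $\mathcal{C}$ to $\mathcal{C}'$ is cut by $\mathcal{G}$ into two equal subarcs, which immediately places the midpoints of $PA$ and $PB$ on $\mathcal{G}$ and identifies the locus as the hypercycle $\mathcal{C}'$ with basis $\mathcal{G}$. Your half-turn argument is a welcome addition, since the paper invokes the bisection property only as ``a simple fact'' without justification.
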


In the spherical case the solution is different. The locus of the vertices of the triangles having a given basis and a given area  is a horocycle passing through the points antipodal to the extremities of the bases; this fact has no immediate analogue in the hyperbolic case.

Finally, we mention the following problem, which was first suggested to us by Norbert A'Campo:

\begin{problem}
Work out the three-dimensional analogue of Lexell's theorem, in the spherical and the hyperbolic  cases.
\end{problem}

The theorems we present in this paper are elementary in the sense that their proofs use basic geometry and no advanced theories. Concerning elementary geometrical questions, Klein writes in his \emph{Lectures on mathematics} \cite{Klein-Lectures} (p. 36): ``[...] This is really a question of elementary geometry; and it is interesting to notice how often in recent times higher research has led back to elementary problems not previously settled."  As a matter of fact, the question to which he refers concerns spherical geometry.

 Independently of the proper interest of the theorems presented, we hope that this paper can motivate the reader to read the original sources.


\begin{thebibliography}{99}



\bibitem{ACP} N. A'Campo and A. Papadopoulos, Notes on Hyperbolic Geometry. In: \emph{Strasbourg Master class on Geometry} (ed. A. Papadopoulos),  IRMA Lectures in Mathematics and Theoretical Physics, vol. 18, European Mathematical Society, Z\"urich,  2012,  p. 1-182.
   

\bibitem{Euler-Geometrica-T}  L. Euler, Geometrica et sphaerica quaedam, \emph{M\'emoires de l'Academie des Sciences de Saint-Petersbourg} 5, 1815, p. 96-114 ; \emph{Opera Omnia} Series 1, vol. XXVI, p. 344-358.


\bibitem{Euler-Variae-T} L. Euler,  Variae speculationes super area triangulorum sphaericorum, \emph{Nova Acta academiae scientarum imperialis Petropolitinae} 10, 1797, p. 47-62 ; \emph{Opera Omnia}  Series 1, vol. XXIX, p. 253-266.

\bibitem{Hogendijk} J. P. Hogendijk, Al-Mu'taman ibn H\=ud, 11th century king of Saragossa and brilliant mathematician, Historia Math. 22 (1) (1995), p. 1-18.


\bibitem{Klein-Lectures} F. Klein, Lectures on mathematics, MacMillan and Co. New York and London, 1894.


\bibitem{Krause} M. Krause, \emph{Die Sph\"arik von Menelaos aus Alexandrien in der verbesserung von  Ab\=u Na\d{s}r Man\d{s}\=ur b.  `Al\=\i \ b. `Ir\=aq}, Abhandlungen der Gesellschaft der Wissenschaften zu G\"ottingen, phil.-hist. Klasse, 3, 17 (Berlin, 1936).



\bibitem{Lamb-Anmer} J. H. Lambert, Anmerkungen und Zus\"atze zur Entwerfung der Land- und Himmelscharten, Hrsg. von A. Wangerin, 1772, ed. Engelmann, Leipzig, 1894.


\bibitem{Lebesgue1855} H. Lebesgue, D\'emonstration du th\'eor\`eme de Lexell, Nouvelles annales de math\'ematiques, 1re s\'erie, tome 14 (1855), p. 24--26.

\bibitem{Legendre} A.-M. Legendre, \emph{\'El\'ements de g\'eom\'etrie}, 7th edition, Firmin Didot, Paris, 1808.

\bibitem{Lexell-Solutio}  A. J. Lexell, Solutio problematis geometrici ex doctrina sphaericorum, \emph{Acta academiae scientarum imperialis Petropolitinae}, 5 (1) 1784, p. 112-126.


\bibitem{AP} A. Papadopoulos, On the works of Euler and his followers on spherical geometry,  \emph{Ga\d{n}ita Bh\=ar\=at\=\i },  (Indian Mathematics), Bulletin of the Indian Society for History of Mathematics, Vol. 36, No. 1-2 (2014) p. 237-292.



\bibitem{AP1} A. Papadopoulos, Euler, la g\'eom\'etrie sph\'erique et le calcul des variations: Quelques points de rep\`ere. In: \emph{Leonhard Euler, math\'ematicien, physicien et th\'eoricien de la musique} (X. Hascher and A. Papadopoulos, ed.), CNRS \'editions, Paris (2015), p. 349-392.




\bibitem{Lambert-Blanchard}  A. Papadopoulos and G. Th\'eret, La th\'eorie des parall\`eles de Lambert,  critical edition of the {\it Theorie der Parallellinien} of J. H. Lambert, with a French translation and mathematical and historical commentaries, ed. Blanchard, coll. Sciences dans l'Histoire, Paris, 2014.


 \bibitem{2012-Yamada2}  A. Papadopoulos and S. Yamada, The Funk and Hilbert geometries for spaces of constant curvature, Monatshefte f\"ur Mathematik 172, 1 (2013) p. 97-120.

 \bibitem{2012-Yamada1}  A. Papadopoulos and S. Yamada, A survey of  the Funk and Hilbert geometries of convex sets in spaces of constant curvature,  Handbook of Hilbert geometry (ed. A. Papadopoulos and M. Troyanov),  IRMA Lectures in Mathematics and Theoretical Physics, vol. 22, European Mathematical Society Publishing House, Z\"urich, 2014, p. 353-379.

    \bibitem{RP1} R. Rashed and A. Papadopoulos, On Menelaus' \emph{Spherics} III.5 in Arabic mathematics, I: Ibn `Ir\=aq, Arabic Science and Philosophy (Cambridge University Press),  vol. 24 (2014), p. 1-68.

    \bibitem{RP2} R. Rashed and A. Papadopoulos, On Menelaus' \emph{Spherics} III.5 in Arabic mathematics, II: Na\d{s}\=\i r al-D\=\i n al-\d{T}\=us\=\i  \  and Ibn Ab\=\i\  Jarr\=ada, to appear in: Arabic Science and Philosophy, Cambridge University Press, 2014.

    \bibitem{RP} R. Rashed and A. Papadopoulos, Critical edition and mathematical commentary with English translation of Menelaus' complete works on spherical geometry based on the Arabic version of al  Haraw\=\i \  (in preparation).


 \bibitem{Shephard-Euler} G. C. Shephard, Euler's Triangle Theorem, \emph{Crux Mathematics} 25, 1999, p. 148-153.




\bibitem{Steiner} J. Steiner, Verwandlung und Teilung sp\"arischer Figuren durch Construction, Journal f\"ur die reine und angewandte Mathematik, vol. 2 (1827),  45-63.

  \bibitem{Steiner3} J. Steiner, Sur le maximum et le minimum des figures dans le plan, sur la sph\`ere et dans l'espace g\'en\'eral, Journal  de math\'ematiques pures et appliqu\'ees, 1\`ere s\'erie, (6)  (1841),  105-170.
  
  \bibitem{Theodosius} Theodosius of Tripoli, the Spherics, French translation, Les Sph\'eriques, by P. Ver Eecke,  Anvers 1926,  reprint, Blanchard, Paris, 1959.

\bibitem{Thurston} W. P. Thurston, \emph{The
Geometry and Topology of Three-manifolds}, Mimeographed notes, Princeton University, 1979.



\end{thebibliography}
\end{document}